\theoremstyle{plain}
\newtheorem{theorem}                {Theorem}      [section]
\newtheorem*{theorem2}                {Theorem \ref{thm:gap2}}
\newtheorem*{theorem3}                {Theorem \ref{thm:surfaces}}
\newtheorem{proposition}  [theorem]  {Proposition}
\newtheorem{corollary}    [theorem]  {Corollary}
\newtheorem{lemma}        [theorem]  {Lemma}
\theoremstyle{definition}
\newtheorem{remark}         {Remark}[section]
\newtheorem{definition}   [theorem]  {Definition}
\DeclareMathOperator{\trace}{trace} 
\DeclareMathOperator{\grad}{grad}
 \DeclareMathOperator{\id}{I}
\DeclareMathOperator{\ric}{Ric}
\DeclareMathOperator{\Span}{span}
\DeclareMathOperator{\cst}{constant}
 \DeclareMathOperator{\im}{Im}
\numberwithin{equation}{section}
\begin{document}

\title[Biharmonic submanifolds with parallel mean curvature]
{Biharmonic submanifolds with parallel mean curvature in $\mathbb{S}^n\times\mathbb{R}$}

\author{Dorel~Fetcu}
\author{Cezar~Oniciuc}
\author{Harold~Rosenberg}

\address{Department of Mathematics\\
"Gh. Asachi" Technical University of Iasi\\
Bd. Carol I no. 11 \\
700506 Iasi, Romania} \email{dfetcu@math.tuiasi.ro; dorel@impa.br}

\address{Faculty of Mathematics\\ "Al.I. Cuza" University of Iasi\\
Bd. Carol I no. 11 \\
700506 Iasi, Romania} \email{oniciucc@uaic.ro}

\address{IMPA\\ Estrada Dona Castorina\\ 110, 22460-320 Rio de
Janeiro, Brasil} \email{rosen@impa.ro}

\thanks{The first author was supported by a Post-Doctoral Fellowship "P\'os-Doutorado S\^enior
(PDS)" offered by FAPERJ, Brazil.}

\begin{abstract} We find a Simons type formula for submanifolds with parallel mean curvature vector (pmc submanifolds)
in product spaces $M^n(c)\times\mathbb{R}$, where $M^n(c)$ is a
space form with constant sectional curvature $c$, and then we use it to prove
a gap theorem for the mean curvature of certain complete proper-biharmonic pmc
submanifolds, and classify proper-biharmonic pmc surfaces in
$\mathbb{S}^n(c)\times\mathbb{R}$.
\end{abstract}

\subjclass[2000]{53E20}

\keywords{biharmonic submanifolds, submanifolds with parallel mean curvature vector field, Simons type equation.}

\maketitle

\section{Introduction}

The notion of \textit{biharmonic maps} was suggested in 1964 by Eells and Sampson in
\cite{JEJS}, as a natural generalization of \textit{harmonic maps}.
Thus, whilst a harmonic map $\psi:(M,g)\rightarrow(\bar M,h)$ between two Riemannian manifolds
is defined as a critical point of the \textit{energy functional}
$$
E(\psi)=\frac{1}{2}\int_{M}|d\psi|^{2}\ v_{g},
$$
a biharmonic map is a critical point of the
\textit{bienergy functional}
$$
E_{2}(\psi)=\frac{1}{2}\int_{M}|\tau(\psi)|^{2}\ v_{g},
$$
where $\tau(\psi)=\trace\nabla d\psi$ is the tension field that
vanishes for harmonic maps. The Euler-Lagrange equation for the
bienergy functional was derived by Jiang in 1986 (see \cite{GYJ}):
\begin{align*}
\tau_{2}(\psi)&=\Delta\tau(\psi)-\trace\bar R(d\psi,\tau(\psi))d\psi\\
&=0
\end{align*}
where $\tau_{2}(\psi)$ is the \textit{bitension field} of $\psi$, $\Delta=\trace(\nabla^{\psi})^2 =\trace(\nabla^{\psi}\nabla^{\psi}-\nabla^{\psi}_{\nabla})$ is the rough Laplacian defined on
sections of $\psi^{-1}(T\bar M)$ and
$\bar R$ is the curvature tensor of $\bar M$, given by $\bar R(X,Y)Z=[\bar\nabla_X,\bar\nabla_Y]Z-\bar\nabla_{[X,Y]}Z$. Since any harmonic map
is biharmonic, we are interested in non-harmonic biharmonic maps,
which are called \textit{proper-biharmonic}.

A \textit{biharmonic submanifold} in a Riemannian manifold is a
submanifold for which the inclusion map is biharmonic. In Euclidean
space the biharmonic submanifolds are the same as those defined by
Chen in \cite{BYC}, as they are characterized by the equation
$\Delta H=0$, where $H$ is the mean curvature vector field and
$\Delta$ is the rough Laplacian.

Some very fertile environments for finding examples of proper-biharmonic submanifolds proved to be the unit Euclidian sphere $\mathbb{S}^n$, and, in general, space forms with positive sectional curvature. For example, whilst there are no proper-bi\-har\-mo\-nic curves and surfaces in $3$-dimensional spaces with non-positive constant sectional curvature (see Chen and Ishikawa's paper \cite{CI} and Dimitric's paper \cite{D} in the case of Euclidian space, and Caddeo, Montaldo and Oniciuc's article \cite{CMO} when the sectional curvature is negative) we do have examples of such submanifolds in $\mathbb{S}^3$ in \cite{CMO1}, where they are explicitly classified.

In the very recent paper \cite{OW}, Ou and Wang studied the
biharmonicity of constant mean curvature surfaces (cmc surfaces) in
Thurston's $3$-dimensional geometries, amongst them being the
product space $\mathbb{S}^2\times\mathbb{R}$.

The case of cmc surfaces in product spaces of type $M^2(c)\times\mathbb{R}$, where $M^2(c)$ is a simply connected surface with constant sectional curvature $c$, and then that of surfaces with parallel mean curvature vector field (pmc surfaces) in product spaces of type $M^n(c)\times\mathbb{R}$, where $M^n(c)$ is a space form, received a special attention (see, for example, Abresch and Rosenberg's papers \cite{AR,AR2} on cmc surfaces, and Alencar, do Carmo and Tribuzy's article \cite{AdCT} on pmc surfaces). From the point of view of biharmonicity, pmc surfaces and, in general, pmc submanifolds in spheres, were studied in \cite{CMO} in \cite{BO}, respectively.

In his paper \cite{JS} from 1968, Simons proved a very important formula for the
Laplacian of the second fundamental form of a minimal submanifold in
a Riemannian manifold and then used it to characterize certain
minimal submanifolds of a sphere and Euclidean space. Over the years, such formulas, called Simons type equations, also proved to be a powerful tool for studying cmc and pmc submanifolds.

In our paper, we first obtain a Simons type equation for pmc
submanifolds in product spaces $M^n(c)\times\mathbb{R}$ and then we
use it to prove a gap phenomenon for the mean curvature of a
proper-biharmonic pmc submanifold. We also investigate the
biharmonicity of pmc surfaces in product spaces and, using a
reduction of codimension result of Eschenburg and Tribuzy in
\cite{ET} and the above mentioned Simons type formula, we get a
classification theorem. Our main results are the following two
theorems.

\begin{theorem2} Let $\Sigma^m$ be a complete proper-biharmonic pmc submanifold
in $\mathbb{S}^n\times\mathbb{R}$, with $m\geq 2$, such that its
mean curvature satisfies
$$
|H|^2>\frac{(m-1)(m^2+4)+(m-2)\sqrt{(m-1)(m-2)(m^2+m+2)}}{2m^3},
$$
and the norm of its second fundamental form $\sigma$ is bounded.
Then $m<n$, $|H|=1$ and $\Sigma^m$ is a minimal submanifold of a
small hypersphere $\mathbb{S}^{n-1}(2)\subset\mathbb{S}^n$.
\end{theorem2}

\begin{theorem3} Let $\Sigma^2$ be a proper-biharmonic pmc surface in $\mathbb{S}^n(c)\times\mathbb{R}$. Then either
\begin{enumerate}
\item $\Sigma^2$ is a minimal surface of a small hypersphere $\mathbb{S}^{n-1}(2c)\subset\mathbb{S}^n(c)$$;$ or

\item $\Sigma^2$ is an $($an open part of$)$ a vertical cylinder $\pi^{-1}(\gamma)$, where $\gamma$ is
a circle in $\mathbb{S}^2(c)$ with curvature equal to $\sqrt{c}$, i.e. $\gamma$ is a biharmonic circle in $\mathbb{S}^2(c)$.
\end{enumerate}
\end{theorem3}

\noindent \textbf{Acknowledgments.} The first author would like to
thank the IMPA in Rio de Janeiro for providing a very stimulative work environment during the preparation of this paper.

\section{Preliminaries}

Let $M^n(c)$ be a space form, i.e. a simply-connected $n$-dimensional manifold with
constant sectional curvature $c$, and consider the product manifold
$\bar M=M^n(c)\times\mathbb{R}$. The expression of the curvature
tensor $\bar R$ of such a manifold can be obtained from
$$
\langle\bar R(X,Y)Z,W\rangle=c\{\langle d\pi Y, d\pi Z\rangle\langle d\pi X, d\pi W\rangle-\langle d\pi X, d\pi Z\rangle\langle d\pi Y, d\pi W\rangle\},
$$
where $\pi:\bar M=M^n(c)\times\mathbb{R}\rightarrow M^n(c)$ is the projection map. After a straightforward computation we get
\begin{equation}\label{eq:barR}
\begin{array}{ll}
\bar R(X,Y)Z=&c\{\langle Y, Z\rangle X-\langle X, Z\rangle Y-\langle Y,\xi\rangle\langle Z,\xi\rangle X+\langle X,\xi\rangle\langle Z,\xi\rangle Y\\ \\&+\langle X,Z\rangle\langle Y,\xi\rangle\xi-\langle Y,Z\rangle\langle X,\xi\rangle\xi\},
\end{array}
\end{equation}
where $\xi$ is the unit vector tangent to $\mathbb{R}$.

Let $\Sigma^m$, $m\leq n$, be an $m$-dimensional submanifold of $\bar M$. From the equation of Gauss
$$
\begin{array}{ll}
\langle R(X,Y)Z,W\rangle=&\langle\bar R(X,Y)Z,W\rangle\\ \\&+\sum_{\alpha=m+1}^{n+1}\{\langle A_{\alpha}Y,Z\rangle\langle A_{\alpha}X,W\rangle-\langle A_{\alpha}X,Z\rangle\langle A_{\alpha}Y,W\rangle\},
\end{array}
$$
we obtain the expression of its curvature tensor
\begin{equation}\label{eq:R}
\begin{array}{ll}
R(X,Y)Z=&c\{\langle Y, Z\rangle X-\langle X, Z\rangle Y-\langle Y,T\rangle\langle Z,T\rangle X+\langle X,T\rangle\langle Z,T\rangle Y\\ \\&+\langle X,Z\rangle\langle Y,T\rangle T-\langle Y,Z\rangle\langle X,T\rangle T\}\\ \\&+\sum_{\alpha=m+1}^{n+1}\{\langle A_{\alpha}Y,Z\rangle A_{\alpha}X-\langle A_{\alpha}X,Z\rangle A_{\alpha}Y\},
\end{array}
\end{equation}
where $T$ is the component of $\xi$ tangent to $\Sigma^m$, $A$ is the shape operator defined by the equation of Weingarten
$$
\bar\nabla_XV=-A_VX+\nabla^{\perp}_XV,
$$
for any vector field $X$ tangent to $\Sigma^m$ and any normal vector
field $V$. Here $\bar\nabla$ is the Levi-Civita connection on $\bar
M$, $\nabla^{\perp}$ is the connection in the normal bundle, and
$A_{\alpha}=A_{E_{\alpha}}$, $\{E_{\alpha}\}_{\alpha=m+1}^{n+1}$
being a local orthonormal frame field in the normal bundle.

\begin{definition} A submanifold $\Sigma^m$ of $M^n(c)\times\mathbb{R}$ is called a \textit{vertical cylinder} over $\Sigma^{m-1}$ if $\Sigma^m=\pi^{-1}(\Sigma^{m-1})$, where $\pi:M^n(c)\times\mathbb{R}\rightarrow M^n(c)$ is the projection map and $\Sigma^{m-1}$ is a submanifold of $M^n(c)$.
\end{definition}

It is easy to see that vertical cylinders $\Sigma^m=\pi^{-1}(\Sigma^{m-1})$ are characterized by the fact that $\xi$ is tangent to $\Sigma^m$.

\begin{definition} If the mean curvature vector field $H$ of a submanifold $\Sigma^m$ is
parallel in the normal bundle, i.e.
$\nabla^{\perp}H=0$, then $\Sigma^m$ is called a \textit{pmc submanifold}.
\end{definition}

\begin{remark}\label{rem:cyl} It is straightforward to verify that $\Sigma^m=\pi^{-1}(\Sigma^{m-1})$
is a pmc vertical cylinder in $M^n(c)\times\mathbb{R}$ if and only
if $\Sigma^{m-1}$ is a pmc submanifold in $M^n(c)$. Moreover, the
mean curvature vector field of $\Sigma^m$ is $H=\frac{n-1}{n} H_0$,
where $H_0$ is the mean curvature vector field of $\Sigma^{m-1}$. It
also easy to prove that the vertical cylinder
$\Sigma^m=\pi^{-1}(\Sigma^{m-1})$ is proper-biharmonic in
$M^n(c)\times\mathbb{R}$ if and only if $\Sigma^{m-1}$ is
proper-biharmonic in $M^n(c)$.
\end{remark}

We end this section by recalling the following two results, which we
shall use later.

\begin{lemma}[\cite{AdC,O}]\label{l:oku} Let $a_i$, $i=1,\ldots,m$, be real numbers such that $\sum_{i=1}^ma_i=0$ and $\sum_{i=1}^ma_i^2=b^2$, where $b=\cst\geq 0$. Then
$$
-\frac{m-2}{\sqrt{m(m-1)}}b^3\leq\sum_{i=1}^ma_i^3\leq\frac{m-2}{\sqrt{m(m-1)}}b^3,
$$
and equality holds in the right-hand $($left-hand$)$ side if and only if $(n-1)$ of the $a_i$'s are non-positive and equal $($$(n-1)$ of the $a_i$'s are non-negative and equal$)$.
\end{lemma}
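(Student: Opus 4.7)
I would treat this as a constrained optimization problem: on the compact set
\[
K \;=\; \bigl\{(a_1,\dots,a_m)\in\mathbb{R}^m : \textstyle\sum_{i=1}^m a_i = 0,\ \sum_{i=1}^m a_i^2 = b^2\bigr\}
\]
(the case $b=0$ is trivial, forcing all $a_i=0$), I aim to maximize and minimize $F(a) = \sum_i a_i^3$. Since $K$ is compact, both extrema are attained, and I locate them via Lagrange multipliers. At any critical point there exist $\lambda,\mu\in\mathbb{R}$ such that $3a_i^2 - 2\mu a_i - \lambda = 0$ for every $i$. Consequently each $a_i$ is a root of one fixed quadratic, so the $a_i$'s take at most two distinct values.

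Write these two values as $\alpha$ with multiplicity $k$ and $\beta$ with multiplicity $m-k$, where $1\leq k\leq m-1$ (the extremes $k=0$ or $k=m$ are incompatible with the constraints for $b>0$). The linear constraint $k\alpha + (m-k)\beta = 0$ gives $\beta = -k\alpha/(m-k)$, and substituting into $\sum_i a_i^2 = b^2$ yields $\alpha^2 = b^2(m-k)/(km)$. A direct computation then produces
\[
\sum_{i=1}^m a_i^3 \;=\; \mathrm{sgn}(\alpha)\,\frac{m-2k}{\sqrt{km(m-k)}}\,b^3.
\]

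The problem therefore reduces to the one-variable discrete optimization of maximizing $(m-2k)^2/(k(m-k))$ over $k\in\{1,\dots,m-1\}$. An elementary check (endpoint comparison, or treating $k$ as a real variable on $[1,m-1]$ and differentiating) shows the maximum is $(m-2)^2/(m-1)$, attained exactly at $k=1$ and $k=m-1$. Substituting back yields the claimed inequality $|\sum_i a_i^3| \leq \frac{m-2}{\sqrt{m(m-1)}}\,b^3$. The equality discussion follows by inspecting the extremal configurations: the right-hand equality corresponds to $k=1$ with $\alpha>0$, i.e.\ one entry is positive while the remaining $m-1$ are negative and mutually equal, and the left-hand equality is the mirror image $\alpha<0$; this matches the stated characterization (up to the evident typo $n\leftrightarrow m$). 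No serious obstacle arises: the only conceptual step is the Lagrange-multiplier argument forcing the $a_i$'s to concentrate on at most two values, after which the argument is purely algebraic.
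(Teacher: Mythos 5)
Your argument is correct and complete. Note, however, that the paper itself gives no proof of this lemma: it is quoted verbatim from Alencar--do Carmo and Okumura (the references \cite{AdC,O}), so there is no in-paper argument to compare against. Your route --- compactness of the constraint set, Lagrange multipliers forcing the $a_i$ to take at most two values $\alpha,\beta$, the explicit evaluation $\sum_i a_i^3=\mathrm{sgn}(\alpha)\,\frac{m-2k}{\sqrt{km(m-k)}}\,b^3$, and the discrete optimization over $k$ (cleanly settled by writing $(m-2k)^2=m^2-4k(m-k)$, so the target is decreasing in $k(m-k)$ and maximal at $k\in\{1,m-1\}$) --- is the standard variational proof and all the computations check out. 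Okumura's original argument is purely algebraic (a completed-square manipulation avoiding any appeal to compactness or critical points), which is marginally more elementary but less transparent about the equality cases; your approach makes the extremal configurations, and hence the equality characterization, immediate. Two small points worth recording: the statement's ``$(n-1)$'' is indeed a typo for ``$(m-1)$'', as you observed; and for $m=2$ the bound degenerates to $0\leq\sum_i a_i^3\leq 0$, which your formula also delivers since $m-2k=0$ at $k=1$, so no separate treatment is needed.
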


\begin{theorem}[Omori-Yau Maximum Principle, \cite{Y}]\label{OY} If $\Sigma^m$ is a complete Riemannian
manifold with Ricci curvature bounded from below, then for any
smooth function $u\in C^2(\Sigma^m)$ with $\sup_{\Sigma^m}
u<+\infty$ there exists a sequence of points
$\{p_k\}_{k\in\mathbb{N}}\subset \Sigma^m$ satisfying
$$
\lim_{k\rightarrow\infty}u(p_k)=\sup_{\Sigma^m}u,\quad |\nabla u|(p_k)<\frac{1}{k}\quad\textnormal{and}\quad\Delta u(p_k)<\frac{1}{k}.
$$

\end{theorem}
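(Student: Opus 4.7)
The plan is the classical approach via an auxiliary radial function. Let $K \geq 0$ be such that $\ric \geq -(m-1)K^2$ on $\Sigma^m$, fix a base point $o \in \Sigma^m$, and set $r(x) = d(o,x)$, $\phi(x) = \sqrt{1+r(x)^2}$. A direct computation gives $|\nabla\phi| = r/\sqrt{1+r^2} \leq 1$; and by the Laplacian comparison theorem, which uses precisely the Ricci lower bound in the form $\Delta r \leq (m-1)K\coth(Kr)$ (interpreted as $(m-1)/r$ when $K = 0$), one obtains $\Delta\phi \leq C$ off the cut locus of $o$ for some constant $C = C(m,K)$. Completeness of $\Sigma^m$ ensures that $\phi$ is proper, i.e.\ $\phi(x) \to +\infty$ as $x$ leaves any compact set.

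For each integer $k \geq 1$ consider $u_k = u - \tfrac{1}{k}\phi$. Since $\sup u < +\infty$ and $\phi$ is proper, $u_k(x) \to -\infty$ at infinity, so $u_k$ attains its supremum at some point $p_k \in \Sigma^m$. Evaluating $u_k(p_k) \geq u_k(x)$ at an arbitrary $x$ yields $\liminf_k u_k(p_k) \geq \sup u$, and combined with $u_k(p_k) \leq u(p_k) \leq \sup u$ this forces both $u(p_k) \to \sup u$ and $\phi(p_k)/k \to 0$.

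The remaining step is to convert the maximum condition at $p_k$ into the pointwise bounds on $|\nabla u|(p_k)$ and $\Delta u(p_k)$. Provided $p_k$ lies outside the cut locus of $o$, $\phi$ is smooth near $p_k$, and the first- and second-order criteria for a maximum give $\nabla u(p_k) = \tfrac{1}{k}\nabla\phi(p_k)$ and $\Delta u(p_k) \leq \tfrac{1}{k}\Delta\phi(p_k)$, hence $|\nabla u|(p_k) \leq 1/k$ and $\Delta u(p_k) \leq C/k$.

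The main obstacle is that $p_k$ may lie on the cut locus of $o$, where $\phi$ is only Lipschitz. This is handled by Calabi's trick: choose a minimizing unit-speed geodesic $\gamma$ from $o$ to $p_k$ and shift the base point to $o_\delta = \gamma(\delta)$ for small $\delta > 0$, working instead with the barrier $\phi_\delta(x) = \sqrt{1+(d(o_\delta,x)+\delta)^2}$. The triangle inequality yields $\phi_\delta \geq \phi$ on $\Sigma^m$ with equality at $p_k$, so $p_k$ remains a global maximum of $u - \tfrac{1}{k}\phi_\delta$; and since $p_k$ is not in the cut locus of $o_\delta$, $\phi_\delta$ is smooth in a neighborhood of $p_k$ and obeys the same bounds $|\nabla\phi_\delta| \leq 1$, $\Delta\phi_\delta \leq C$ uniformly in $\delta$. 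The smooth maximum principle applied to $u - \tfrac{1}{k}\phi_\delta$ then reproduces the estimates in general. A standard reindexing $k \mapsto \lceil Ck\rceil + 1$ absorbs the constant $C$ and sharpens $\leq$ to $<$, producing a sequence satisfying the announced conditions.
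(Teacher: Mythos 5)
The paper does not prove this statement: it is recalled as a known result and attributed to Yau \cite{Y}, so there is no internal proof to compare against. Your argument is the classical one from that reference --- the barrier $\phi=\sqrt{1+r^2}$, the Laplacian comparison $\Delta r\le (m-1)K\coth(Kr)$ giving $|\nabla\phi|\le 1$ and $\Delta\phi\le C(m,K)$, maximization of $u-\tfrac{1}{k}\phi$, and Calabi's trick to handle the cut locus --- and it is correct; the only points worth flagging are minor: the uniform bound $\Delta\phi_\delta\le C$ really holds only for $\delta$ small (one lets $\delta\to 0^+$ in the final inequalities, using that $d(o_\delta,p_k)=r(p_k)-\delta$ stays bounded away from $0$), and the standard fact that $p_k\notin\mathrm{Cut}(o_\delta)$ deserves at least a citation. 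With the reindexing you describe to turn $\le C/k$ into $<1/k$, the proof is complete.
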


\section{A Simons type formula for submanifolds in $M^n(c)\times\mathbb{R}$}\label{s1}

Let $\Sigma^m$, $m\leq
n$, be an $m$-dimensional submanifold of
$M^n(c)\times\mathbb{R}$, with mean curvature vector field $H$. In this section we shall compute the Laplacian of the squared
norm of $A_V$, where $V$ is a normal vector field to the submanifold, such
that $V$ is parallel in the normal bundle, i.e. $\nabla^{\perp}V=0$,
and $\trace A_V=\cst$.

\begin{lemma}\label{lemma:com}
If $U$ and $V$ are normal vector fields to $\Sigma^m$ and $V$ is parallel
in the normal bundle, then $[A_V,A_U]=0$, i.e. $A_V$ commutes with
$A_U$.
\end{lemma}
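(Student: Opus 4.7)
The plan is to derive this commutation from the Ricci equation for the normal curvature, using the explicit form of the ambient curvature tensor given by \eqref{eq:barR}. Recall that Ricci's equation reads
$$
\langle [A_V,A_U]X,Y\rangle \;=\; \langle R^{\perp}(X,Y)V,U\rangle \;-\; \langle \bar R(X,Y)V,U\rangle
$$
for all vectors $X,Y$ tangent to $\Sigma^m$ and normal vector fields $U,V$. The hypothesis $\nabla^{\perp}V=0$ immediately kills the first term on the right, since
$$
R^{\perp}(X,Y)V \;=\; \nabla^{\perp}_X\nabla^{\perp}_Y V - \nabla^{\perp}_Y\nabla^{\perp}_X V - \nabla^{\perp}_{[X,Y]}V \;=\; 0.
$$
Thus everything reduces to showing that $\langle \bar R(X,Y)V, U\rangle = 0$ whenever $X,Y$ are tangent and $U,V$ are normal to $\Sigma^m$.

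To verify this, I would substitute $Z=V$ into \eqref{eq:barR} and decompose $\xi = T + N$ with $T$ tangent and $N$ normal to $\Sigma^m$. Using $\langle X,V\rangle = \langle Y,V\rangle = 0$ (because $V$ is normal and $X,Y$ are tangent) and $\langle V,\xi\rangle = \langle V,N\rangle$, four of the six terms on the right-hand side of \eqref{eq:barR} drop out at once, and the two survivors combine to
$$
\bar R(X,Y)V \;=\; c\,\langle V,N\rangle\bigl(\langle X,T\rangle Y - \langle Y,T\rangle X\bigr),
$$
which is manifestly tangent to $\Sigma^m$. Pairing with the normal field $U$ therefore gives $\langle \bar R(X,Y)V, U\rangle = 0$, and hence $\langle [A_V,A_U]X,Y\rangle = 0$ for all tangent $X,Y$, which is the desired conclusion.

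The argument is essentially bookkeeping and I do not expect any real obstacle; the one point requiring a little attention is the observation that, although the presence of the $\mathbb{R}$-factor adds $\xi$-terms to the ambient curvature that are not present in the pure space-form case, those extra terms only contribute tangential vectors to $\bar R(X,Y)V$ and hence are invisible when tested against the normal field $U$. This is the structural reason the familiar commutativity statement for parallel normal sections in a space form survives verbatim in $M^n(c)\times\mathbb{R}$.
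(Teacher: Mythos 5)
Your argument is correct and is essentially identical to the paper's proof: both invoke the Ricci equation, use $\nabla^{\perp}V=0$ to kill $R^{\perp}(X,Y)V$, and observe from \eqref{eq:barR} that $\bar R(X,Y)V$ is tangent to $\Sigma^m$, so $\langle\bar R(X,Y)V,U\rangle=0$. The paper states this more tersely, but your explicit verification that $\bar R(X,Y)V=c\langle V,N\rangle(\langle X,T\rangle Y-\langle Y,T\rangle X)$ is exactly the computation being implicitly relied upon.
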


\begin{proof}
The conclusion follows easily, from the Ricci equation,
$$
\langle R^{\perp}(X,Y)V,U\rangle=\langle[A_V,A_U]X,Y\rangle+\langle\bar R(X,Y)V,U\rangle,
$$
since $R^{\perp}(X,Y)V=0$ and \eqref{eq:barR} implies that $\langle\bar R(X,Y)V,U\rangle=0$.
\end{proof}

Now, from the Codazzi equation,
$$
\begin{array}{cl}
\langle \bar R(X,Y)Z,V\rangle=&\langle\nabla^{\perp}_X\sigma(Y,Z),V\rangle-\langle\sigma(\nabla_XY,Z),V\rangle
-\langle\sigma(Y,\nabla_XZ),V\rangle\\ \\&-\langle\nabla^{\perp}_Y\sigma(X,Z),V\rangle+\langle\sigma(\nabla_YX,Z),V\rangle
+\langle\sigma(X,\nabla_YZ),V\rangle,
\end{array}
$$
where $\sigma$ is the second fundamental form of $\Sigma^m$, we get
$$
\begin{array}{rl}
\langle \bar R(X,Y)Z,V\rangle=&X(\langle A_VY,Z\rangle)-\langle\sigma(Y,Z),\nabla^{\perp}_XV\rangle
-\langle A_V(\nabla_XY),Z\rangle\\ \\&-\langle A_VY,\nabla_XZ\rangle-Y(\langle A_VX,Z\rangle)+\langle\sigma(X,Z),\nabla^{\perp}_YV\rangle
\\ \\&+\langle A_V(\nabla_YX),Z\rangle+\langle A_VX,\nabla_YZ\rangle\\ \\=&\langle(\nabla_XA_V)Y-(\nabla_YA_V)X,Z\rangle,
\end{array}
$$
since $\nabla^{\perp}V=0$. Therefore, using \eqref{eq:barR}, we obtain
\begin{equation}\label{eq:Codazzi}
(\nabla_XA_V)Y=(\nabla_YA_V)X+c\langle V,N\rangle(\langle Y,T\rangle X-\langle X,T\rangle Y),
\end{equation}
where $N$ is the normal part of $\xi$.

Next, we have the following Weitzenb\"ock fromula
\begin{equation}\label{eq:Laplacian}
\frac{1}{2}\Delta|A_V|^2=|\nabla A_V|^2+\langle\trace\nabla^2A_V,A_V\rangle,
\end{equation}
where we extended the metric $\langle,\rangle$ to the tensor space in the standard way.

The second term in the right hand side of \eqref{eq:Laplacian} can
be calculated by using a method introduced in \cite{NS}, and, in the following, for
the sake of completeness, we shall sketch this computation.

Let us consider
$$
C(X,Y)=(\nabla^2 A_V)(X,Y)=\nabla_X(\nabla_YA_V)-\nabla_{\nabla_XY}A_V,
$$
and note that we have the following Ricci commutation
formula
\begin{equation}\label{eq:C} C(X,Y)=C(Y,X)+[R(X,Y),A_V].
\end{equation}
Next, consider an orthonormal basis $\{e_i\}_{i=1}^{m}$ in
$T_p\Sigma^m$, $p\in\Sigma^m$, extend $e_i$ to vector fields $E_i$
in a neighborhood of $p$ such that
$\{E_i\}$ is a geodesic frame field around $p$, and let us denote $X=E_k$. We have
$$
(\trace\nabla^2A_V)X=\sum_{i=1}^mC(E_i,E_i)X.
$$

Using equation \eqref{eq:Codazzi}, we get, at $p$,
$$
\begin{array}{ll}
C(E_i,X)E_i&=\nabla_{E_i}((\nabla_{X}A_V)E_i)\\ \\&=\nabla_{E_i}((\nabla_{E_i}A_V)X)+c\nabla_{E_i}(\langle V,N\rangle(\langle E_i,T\rangle X-\langle X,T\rangle E_i))
\end{array}
$$
and then
\begin{equation}\label{eq:1}
\begin{array}{lcl}
C(E_i,X)E_i&=&C(E_i,E_i)X-c\langle A_VE_i,T\rangle(\langle E_i,T\rangle X-\langle X,T\rangle E_i)\\ \\&&+c\langle V,N\rangle(\langle A_NE_i,E_i\rangle X-\langle A_NX,E_i\rangle E_i),
\end{array}
\end{equation}
where we used $\sigma(E_i,T)=-\nabla^{\perp}_{E_i}N$ and
$\nabla_{E_i}T=A_NE_i$, which follow from the fact that $\xi$ is
parallel, i.e. $\bar\nabla\xi=0$.

We also have, at $p$,
\begin{equation}\label{eq:2}
C(X,E_i)E_i=\nabla_{X}((\nabla_{E_i}A_V)E_i),
\end{equation}
and, from \eqref{eq:C}, \eqref{eq:1} and \eqref{eq:2}, we get, also at $p$,
$$
\begin{array}{lcl}
C(E_i,E_i)X&=&\nabla_{X}((\nabla_{E_i}A_V)E_i)+[R(E_i,X),A_V]E_i\\ \\&&+c\langle A_VE_i,T\rangle(\langle E_i,T\rangle X-\langle X,T\rangle E_i)\\ \\&&-c\langle V,N\rangle(\langle A_NE_i,E_i\rangle X-\langle A_NX,E_i\rangle E_i).
\end{array}
$$
Since $\nabla_{E_i}A_V$ is symmetric, from \eqref{eq:Codazzi}, one obtains
$$
\begin{array}{lcl}
\langle\sum_{i=1}^m(\nabla_{E_i}A_V)E_i,Z\rangle&=&\sum_{i=1}^m\langle E_i,(\nabla_{E_i}A_V)Z\rangle=\sum_{i=1}^m\langle E_i,(\nabla_{Z}A_V)E_i\rangle\\ \\&&+c\langle V,N\rangle\sum_{i=1}^m\langle E_i,\langle Z,T\rangle E_i-\langle E_i,T\rangle Z\rangle\\ \\&=&\trace(\nabla_ZA_V)+c(m-1)\langle V,N\rangle\langle T,Z\rangle\\ \\&=&Z(\trace A_V)+c(m-1)\langle V,N\rangle\langle T,Z\rangle\\ \\&=&c(m-1)\langle V,N\rangle\langle T,Z\rangle,
\end{array}
$$
for any vector $Z$ tangent to $\Sigma^m$, since $\trace A_V=\cst$.

From the Gauss equation \eqref{eq:R} of the surface $\Sigma^2$, and Lemma \ref{lemma:com}, we get, after a straightforward computation,
$$
\begin{array}{lcl}
\sum_{i=1}^mR(E_i,X)A_VE_i&=&c\{A_VX-(\trace A_V)X+(\trace A_V)\langle X,T\rangle
T\\ \\&&-\langle A_VX,T\rangle T
-\langle X,T\rangle A_VT+\langle A_VT,T\rangle X\}
\\ \\&&+\sum_{\alpha=m+1}^{n+1}\{A_VA_{\alpha}^2 X-(\trace(A_VA_{\alpha}))A_{\alpha}X\},
\end{array}
$$
and
$$
\begin{array}{lcl}
\sum_{i=1}^mA_VR(E_i,X)E_i&=&-c\{(m-1-|T|^2)A_VX-(m-2)\langle X,T\rangle A_VT\}\\ \\&&+\sum_{\alpha=m+1}^{n+1}\{A_VA_{\alpha}^2X-(\trace A_{\alpha})A_VA_{\alpha}X\}.
\end{array}
$$

Therefore, we have
$$
\begin{array}{lcl}
(\trace\nabla^2A_V)X&=&\sum_{i=1}^m C(E_i,E_i)X\\ \\&=&\sum_{i=1}^m[R(E_i,X),A_V]E_i\\ \\
&&+c\{m\langle V,N\rangle A_NX-(m-1)\langle A_VX,T\rangle T+\langle A_VT,T\rangle X\\ \\&&-\langle X,T\rangle A_VT-m\langle H,N\rangle\langle V,N\rangle X\}\\ \\
&=&c\{(m-|T|^2)A_VX+2\langle A_VT,T\rangle X-m\langle A_VX,T\rangle T\\ \\&&-m\langle X,T\rangle A_VT+m\langle V,N\rangle A_NX-m\langle H,N\rangle\langle V,N\rangle X\\ \\&&-(\trace A_V)X+(\trace A_V)\langle X,T\rangle T\}\\ \\&&+\sum_{\alpha=m+1}^{n+1}\{(\trace A_{\alpha})A_VA_{\alpha}X-(\trace(A_VA_{\alpha}))A_{\alpha}X\},
\end{array}
$$
and then
$$
\begin{array}{lcl}
\langle\trace\nabla^2A_V,A_V\rangle&=&\sum_{i=1}^m\langle(\trace\nabla^2A_V)E_i,A_VE_i\rangle\\ \\&=&c\{(m-|T|^2)|A_V|^2-2m|A_VT|^2+3(\trace A_V)\langle A_VT,T\rangle\\ \\&&+m(\trace(A_NA_V))\langle V,N\rangle-(\trace A_V)^2\\ \\&&-m(\trace A_V)\langle H,N\rangle\langle V,N\rangle\}\\ \\&&+\sum_{\alpha=m+1}^{n+1}\{(\trace A_{\alpha})(\trace(A_V^2A_{\alpha}))-(\trace(A_VA_{\alpha}))^2\}.
\end{array}
$$

Thus, from \eqref{eq:Laplacian}, we obtain the following proposition.

\begin{proposition}\label{p:delta} Let $\Sigma^m$ be a submanifold of $M^n(c)\times\mathbb{R}$, with mean curvature vector field $H$ and shape operator $A$. If $V$ is a normal vector field, parallel in the normal bundle, with $\trace A_V=\cst$, then
\begin{equation}\label{eq:delta2}
\begin{array}{lcl}
\frac{1}{2}\Delta|A_V|^2&=&|\nabla A_V|^2+c\{(m-|T|^2)|A_V|^2-2m|A_VT|^2\\ \\&&+3(\trace A_V)\langle A_VT,T\rangle\\ \\&&+m(\trace(A_NA_V))\langle V,N\rangle-(\trace A_V)^2\\ \\&&-m(\trace A_V)\langle H,N\rangle\langle V,N\rangle\}\\ \\&&+\sum_{\alpha=m+1}^{n+1}\{(\trace A_{\alpha})(\trace(A_V^2A_{\alpha}))-(\trace(A_VA_{\alpha}))^2\},
\end{array}
\end{equation}
where $\{E_{\alpha}\}_{\alpha=m+1}^{n+1}$ is a local orthonormal frame field in the normal bundle.
\end{proposition}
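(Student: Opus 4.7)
The plan is to start from the standard Weitzenböck formula
\[
\tfrac{1}{2}\Delta|A_V|^2=|\nabla A_V|^2+\langle\trace\nabla^2 A_V,A_V\rangle,
\]
so that all the work reduces to computing $\trace\nabla^2 A_V$ as a symmetric endomorphism of $T\Sigma^m$, and then contracting with $A_V$. The key inputs are the Codazzi equation \eqref{eq:Codazzi} (whose correction term reflects the ambient product structure through $c$, $\langle V,N\rangle$ and $T$), the Ricci commutation formula $C(X,Y)=C(Y,X)+[R(X,Y),A_V]$ for the second covariant derivative, the Gauss equation \eqref{eq:R}, and Lemma \ref{lemma:com}, which guarantees that the parallel normal direction $V$ produces a shape operator commuting with all the other $A_\alpha$.

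First I would fix $p\in\Sigma^m$, take a geodesic orthonormal frame $\{E_i\}$ at $p$, and compare $C(E_i,X)E_i$ with $C(X,E_i)E_i$ using Codazzi. Because $\bar\nabla\xi=0$, the decomposition $\xi=T+N$ gives $\nabla_{E_i}T=A_N E_i$ and $\sigma(E_i,T)=-\nabla^{\perp}_{E_i}N$, so differentiating the Codazzi correction $c\langle V,N\rangle(\langle E_i,T\rangle X-\langle X,T\rangle E_i)$ produces exactly the intrinsic tangential and $A_N$-terms that will appear in \eqref{eq:delta2}. The Ricci commutation then converts $C(X,E_i)E_i$ into $C(E_i,E_i)X$ modulo a curvature commutator $[R(E_i,X),A_V]E_i$.

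The next step is to handle $\nabla_X\!\bigl(\sum_i(\nabla_{E_i}A_V)E_i\bigr)$: applying Codazzi once more, symmetry of $\nabla_{E_i}A_V$, and the identity $\trace(\nabla_Z A_V)=Z(\trace A_V)$, the hypothesis $\trace A_V=\cst$ collapses this term to a purely geometric expression $c(m-1)\langle V,N\rangle\,T$ (as a $1$-form dualized via $Z$). In parallel, I would expand $\sum_i R(E_i,X)A_V E_i$ and $\sum_i A_V R(E_i,X)E_i$ from the Gauss formula \eqref{eq:R}; this is where Lemma \ref{lemma:com} is essential, because it lets me rewrite expressions like $\trace(A_V A_\alpha^2)$ symmetrically and cancel the terms $A_V A_\alpha^2 X$ appearing in both sums, leaving the clean bilinear residue $(\trace A_\alpha)\,A_V A_\alpha X-(\trace(A_V A_\alpha))A_\alpha X$.

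Assembling these pieces yields an explicit formula for $(\trace\nabla^2 A_V)X$. Contracting with $A_V$ via $\langle\trace\nabla^2 A_V,A_V\rangle=\sum_i\langle(\trace\nabla^2 A_V)E_i,A_V E_i\rangle$, using that $A_V$ is self-adjoint and commutes with the $A_\alpha$, and simplifying (in particular collecting the coefficients of $|A_V|^2$, $|A_V T|^2$, $\langle A_V T,T\rangle$ and $\langle V,N\rangle$), produces the stated formula \eqref{eq:delta2}. The main obstacle is purely bookkeeping: correctly differentiating the Codazzi correction term — which is where the interaction between the product structure ($T$, $N$, $A_N$) and the parallel direction $V$ materializes — and systematically exploiting $[A_V,A_\alpha]=0$ together with $\trace A_V=\cst$ to merge what would otherwise be an unwieldy mixture of second-order and curvature terms into the compact quadratic expression in the $A_\alpha$'s that closes the formula.
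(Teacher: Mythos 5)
Your proposal follows essentially the same route as the paper: the Weitzenb\"ock formula, the Codazzi equation \eqref{eq:Codazzi}, the Ricci commutation formula for $C(X,Y)$ computed in a geodesic frame (the Nomizu--Smyth method), the reduction of $\sum_i(\nabla_{E_i}A_V)E_i$ to $c(m-1)\langle V,N\rangle T$ via $\trace A_V=\cst$, and the Gauss equation together with Lemma \ref{lemma:com} to produce the quadratic terms in the $A_\alpha$'s. The intermediate identities you single out, including $\nabla_{E_i}T=A_NE_i$, $\sigma(E_i,T)=-\nabla^{\perp}_{E_i}N$, and the cancellation of the $A_VA_\alpha^2X$ terms, are exactly those appearing in the paper's computation, so the argument is correct and not a different approach.
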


\section{A gap theorem for biharmonic pmc submanifolds in $\mathbb{S}^n\times\mathbb{R}$}

Whereas complete biharmonic pmc submanifolds of
$\mathbb{S}^n\times\mathbb{R}$ are the subject of our first main
theorem, we have the following result for compact submanifolds.

\begin{proposition}\label{thm:gap} If $\Sigma^m$ is a compact biharmonic submanifold in $\mathbb{S}^n(c)\times\mathbb{R}$, then $\Sigma^{m}$ lies in $\mathbb{S}^n(c)$.
\end{proposition}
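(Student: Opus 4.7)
The plan is to exploit the product structure of $\bar M=\mathbb{S}^n(c)\times\mathbb{R}$ by studying the height function $h=\bar h\circ\phi\colon\Sigma^m\to\mathbb{R}$, where $\bar h(p,t)=t$ is projection onto the $\mathbb{R}$-factor and $\phi$ denotes the inclusion. The goal is to prove that $h$ must be constant, so that $\Sigma^m$ is contained in a slice $\mathbb{S}^n(c)\times\{t_0\}$, which is isometric to $\mathbb{S}^n(c)$.

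First I would compute $\Delta h$. Since $d\bar h=\langle\cdot,\xi\rangle$ and $\xi$ is parallel in $\bar M$, one has $\bar\nabla d\bar h=0$. The composition formula $\tau(\bar h\circ\phi)=d\bar h(\tau(\phi))+\trace(\bar\nabla d\bar h)(d\phi,d\phi)$ therefore reduces to $\Delta h=d\bar h(\tau(\phi))=m\langle H,\xi\rangle=m\langle H,N\rangle$, where $N$ denotes the normal component of $\xi$ along $\Sigma^m$.

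The parallelism of $d\bar h$ also lets one commute the rough Laplacian with $d\bar h$: for any section $V$ of $\phi^{-1}T\bar M$, $\Delta(d\bar h(V))=d\bar h(\Delta V)$. Applied to $V=\tau(\phi)=mH$, together with the biharmonic equation rewritten as $\Delta\tau(\phi)=\trace\bar R(d\phi,\tau(\phi))d\phi$, this yields
$$
\Delta^2 h=\langle\trace\bar R(d\phi,mH)d\phi,\xi\rangle.
$$
The decisive point is that formula \eqref{eq:barR} immediately gives $\bar R(X,Y)\xi=0$ for all $X,Y$; hence, by the skew-symmetry of $\bar R$ in its last two slots, $\langle\bar R(X,Y)Z,\xi\rangle=0$ for all $X,Y,Z$, and so $\Delta^2 h\equiv 0$ on $\Sigma^m$.

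Finally, compactness of $\Sigma^m$ permits integration by parts:
$$
\int_{\Sigma^m}(\Delta h)^2\,v_g=\int_{\Sigma^m}h\cdot\Delta^2 h\,v_g=0,
$$
so $\Delta h\equiv 0$, and a harmonic function on a closed manifold is constant by the maximum principle. Thus $h$ is constant and $\Sigma^m$ lies in a slice. No serious obstacle arises; the argument is essentially bookkeeping with composition formulas, the genuinely geometric input being the product-structure identity $\bar R(\cdot,\cdot)\xi=0$, which is exactly what forces the height function of a biharmonic submanifold to be biharmonic and, on a closed manifold, therefore harmonic.
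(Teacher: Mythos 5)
Your argument is correct and follows the same route as the paper: both reduce the problem to showing the height function $h=t\circ i$ is biharmonic, hence harmonic on a compact manifold, hence constant. You have merely made explicit the two steps the paper delegates elsewhere, namely the verification that $\tau_2(h)=dt(\tau_2(i))$ (via $\bar\nabla\xi=0$ and $\langle\bar R(\cdot,\cdot)\cdot,\xi\rangle=0$) and the integration-by-parts argument showing a biharmonic function on a closed manifold is harmonic, which the paper cites from Jiang.
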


\begin{proof} The \textit{height function} of a submanifold $\Sigma^m$ in $\mathbb{S}^n(c)\times\mathbb{R}$ is defined by
$$
h=t\circ i:\Sigma^m\rightarrow\mathbb{R},
$$
where $t:\mathbb{S}^n(c)\times\mathbb{R}\rightarrow\mathbb{R}$ is
the projection map and $i:\Sigma^m\rightarrow
\mathbb{S}^n(c)\times\mathbb{R}$ is the inclusion map. It is easy to
verify that
$$
\tau(h)=dt(\tau(i))\quad\textnormal{and}\quad\tau_2(h)=dt(\tau_2(i)),
$$
and we see that, if $\Sigma^m$ is biharmonic, then $h$ is also a
biharmonic function.
 
Since $\Sigma^m$ is a compact biharmonic submanifold, it follows
that $h$ is a real valued biharmonic function defined on a compact
manifold, which, according to a result in \cite{GYJ}, leads to the
fact that $h$ is actually a harmonic function, but then, using the
maximum principle, we get that $h$ is constant, i.e.  $\Sigma^{m}$
lies in $\mathbb{S}^n(c)$.
\end{proof}

We recall now the following three results which we shall use later
in this paper.

\begin{theorem}[\cite{On}]\label{thm:teza} A proper-biharmonic cmc submanifold $\Sigma^m$ in
$\mathbb{S}^n(c)$, with mean curvature equal to $\sqrt{c}$, is
minimal in a small hypersphere
$\mathbb{S}^{n-1}(2c)\subset\mathbb{S}^n(c)$.
\end{theorem}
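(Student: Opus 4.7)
My plan is to combine the splitting of the biharmonic equation for an isometric immersion into a space form with the algebraic pinching forced by $|H|^2=c$, and then close with a classical reduction-of-codimension theorem. First I would record the standard decomposition of the bitension field: projecting $\tau_2(\psi)$ into its tangent and normal parts along $\Sigma^m$ and using $\bar R(X,Y)Z=c(\langle Y,Z\rangle X-\langle X,Z\rangle Y)$ in $\mathbb{S}^n(c)$, the biharmonicity of a cmc isometric immersion $\psi:\Sigma^m\to\mathbb{S}^n(c)$ is equivalent to the system
\[
\Delta^{\perp}H+\trace\sigma(\cdot,A_H\,\cdot)=mc\,H,\qquad \sum_{i=1}^{m} A_{\nabla^{\perp}_{e_i}H}(e_i)=0.
\]

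Next I would take the inner product of the normal equation with $H$. Since $|H|^2=c$ is constant, $\langle\Delta^{\perp}H,H\rangle=-|\nabla^{\perp}H|^2$ and $\langle\trace\sigma(\cdot,A_H\,\cdot),H\rangle=|A_H|^2$, giving the pointwise identity
\[
|A_H|^2=|\nabla^{\perp}H|^2+mc\,|H|^2=|\nabla^{\perp}H|^2+mc^2.
\]
On the other hand, Cauchy--Schwarz applied to the symmetric operator $A_H$, whose trace equals $m|H|^2=mc$, yields $|A_H|^2\ge m|H|^4=mc^2$, with equality precisely when $A_H=|H|^2\,\id=c\,\id$, i.e.\ when $\Sigma^m$ is pseudo-umbilical in the direction of $H$.

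The heart of the argument, and in my view the main obstacle, is to upgrade these to equalities. The identity and Cauchy--Schwarz only yield $|\nabla^{\perp}H|^2\ge 0$, which is trivial; to deduce $\nabla^{\perp}H=0$ one must exploit the tangent equation $\sum_i A_{\nabla^{\perp}_{e_i}H}(e_i)=0$ together with the full vector-valued normal equation (not merely its scalar projection along $H$). A natural route is to combine these with a Simons-type computation applied to $|A_H|^2$ in order to obtain an opposite bound $|A_H|^2\le mc^2$, forcing equality. This is precisely the borderline case $|H|^2=c$ at which the two inequalities coalesce, and it is where the detailed arguments of \cite{On} do their work.

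Once $\nabla^{\perp}H=0$ (so $\Sigma^m$ is a pmc submanifold) and $A_H=c\,\id$ (pseudo-umbilical), I would invoke the classical reduction-of-codimension theorem for pseudo-umbilical pmc submanifolds of a space form: such a submanifold lies in a totally umbilical submanifold of $\mathbb{S}^n(c)$. The values $|H|^2=c$ and $A_H=c\,\id$ then pin down this umbilical submanifold to be the small hypersphere $\mathbb{S}^{n-1}(2c)\subset\mathbb{S}^n(c)$, whose own mean curvature vector in $\mathbb{S}^n(c)$ has length exactly $\sqrt{c}$; since the mean curvature vector of $\Sigma^m$ in $\mathbb{S}^n(c)$ coincides with that of $\mathbb{S}^{n-1}(2c)$ in $\mathbb{S}^n(c)$, $\Sigma^m$ must be minimal inside $\mathbb{S}^{n-1}(2c)$.
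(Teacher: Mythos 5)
The paper offers no proof of this statement: it is recalled verbatim from \cite{On}, so there is no internal argument to compare you against, and your proposal must stand on its own. It does not. You correctly split the biharmonic equation for a cmc immersion into $\mathbb{S}^n(c)$, correctly derive the pointwise identity $|A_H|^2=mc^2+|\nabla^{\perp}H|^2$ by pairing the normal component with $H$, and correctly record the Cauchy--Schwarz bound $|A_H|^2\geq m|H|^4=mc^2$. But, as you yourself admit, these are two lower bounds pointing in the same direction, and together they yield only $|\nabla^{\perp}H|^2\geq 0$. The step that would force $\nabla^{\perp}H=0$ and $A_H=c\,\id$ is the entire content of the theorem --- the reduction-of-codimension endgame for a pseudo-umbilical pmc submanifold, and the identification of the umbilical hypersurface as $\mathbb{S}^{n-1}(2c)$ from $|H|=\sqrt{c}$, are routine once that is known --- and it is precisely the step you delegate to ``the detailed arguments of \cite{On}''. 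A proof that defers its crux to the reference being proved is a restatement, not a derivation.

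Moreover, the route you gesture at for closing the gap is not obviously available. A Simons-type formula for $|A_H|^2$ of the kind derived in Section~\ref{s1} (Proposition~\ref{p:delta}) is established under the hypotheses $\nabla^{\perp}V=0$ and $\trace A_V=\cst$; applying it to $V=H$ presupposes exactly the parallelism you are trying to prove. And without a compactness assumption (which the statement does not make) you cannot simply integrate $\tfrac12\Delta|A_H|^2$ to convert a differential inequality into the reverse algebraic bound $|A_H|^2\leq mc^2$. The arguments of \cite{On} (see also \cite{BMO1}) handle the borderline value $|H|=\sqrt{c}$ by a genuinely different device, working with the composed immersion $\Sigma^m\subset\mathbb{S}^n(c)\subset\mathbb{R}^{n+1}$ and the finite-type description of the position vector, for which $|H|=\sqrt{c}$ forces a $1$-type decomposition with non-zero center, hence minimality in a small hypersphere. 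Until you supply an argument of that kind (or some other mechanism that actually kills $|\nabla^{\perp}H|$ and the traceless part of $A_H$), the proposal has a genuine gap at its central step.
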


\begin{theorem}[\cite{BO}]\label{thm:BO} If $\Sigma^m$ is a
proper-biharmonic pmc submanifold in $\mathbb{S}^n(c)$, with mean
curvature vector field $H$ and $m>2$, then
$|H|\in\big(0,\frac{m-2}{m}\sqrt{c}\big]\cup\{\sqrt{c}\}$. Moreover,
$|H|=\frac{m-2}{m}\sqrt{c}$ if and only if $\Sigma^m$ is $($an open
part of$)$ a standard product
$$
\Sigma_1^{m-1}\times\mathbb{S}^1(2c)\subset\mathbb{S}^n(c),
$$
where $\Sigma_1^{m-1}$ is a minimal submanifold in
$\mathbb{S}^{n-2}(2c)$.
\end{theorem}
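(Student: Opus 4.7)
The plan is to decode biharmonicity for a pmc immersion into $\mathbb{S}^n(c)$ in terms of the shape operators, specialize the Simons formula of Proposition~\ref{p:delta} to the case $T=0$, $N=0$ (there is no $\mathbb{R}$-factor), and then push the resulting pointwise identity against the algebraic inequality of Lemma~\ref{l:oku}. The boundary value $|H|=\sqrt{c}$ will be handled via Theorem~\ref{thm:teza}, while the threshold $|H|=\frac{m-2}{m}\sqrt{c}$ will be identified by tracking the equality cases.

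First I would compute that, for a pmc immersion in $\mathbb{S}^n(c)$, the rough Laplacian of $H$ reduces to $\Delta H=-\trace\sigma(\cdot,A_H\cdot)$, so that the vanishing of the bitension field of the inclusion becomes $\trace\sigma(\cdot,A_H\cdot)=mcH$. Picking the parallel unit normal $E_{m+1}=H/|H|$, this is equivalent to
$$
\trace A_{m+1}^2=mc,\qquad \trace(A_{m+1}A_\alpha)=0\quad(\alpha\geq m+2),
$$
together with $\trace A_{m+1}=m|H|$ and $\trace A_\alpha=0$ for $\alpha\geq m+2$. Denoting by $\lambda_1,\ldots,\lambda_m$ the eigenvalues of $A_{m+1}$ and setting $\mu_i:=\lambda_i-|H|$ gives $\sum\mu_i=0$ and $\sum\mu_i^2=m(c-|H|^2)$, so in particular $|H|\leq\sqrt{c}$ at once.

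Next, I would apply Proposition~\ref{p:delta} with $V=E_{m+1}$ (the $T$- and $N$-terms vanish); since $|A_{m+1}|^2=mc$ is constant, the formula collapses to
$$
0=|\nabla A_{m+1}|^2-cm^2|H|^2+m|H|\trace A_{m+1}^3,
$$
and rewriting $\trace A_{m+1}^3$ in terms of the $\mu_i$ gives
$$
|\nabla A_{m+1}|^2+2m^2|H|^2(c-|H|^2)+m|H|\sum_{i=1}^m\mu_i^3=0.
$$
Applying Lemma~\ref{l:oku} to bound $\sum\mu_i^3$ from below by $-\tfrac{m-2}{\sqrt{m(m-1)}}\bigl(m(c-|H|^2)\bigr)^{3/2}$, and cancelling positive factors (assuming $0<|H|<\sqrt{c}$), yields $2|H|\leq\tfrac{m-2}{\sqrt{m-1}}\sqrt{c-|H|^2}$, which squares and rearranges to $m^2|H|^2\leq(m-2)^2c$, i.e., $|H|\leq\tfrac{m-2}{m}\sqrt{c}$. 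The remaining case $|H|=\sqrt{c}$ forces every $\mu_i=0$ and hence $A_H=|H|^2\id$, whereupon Theorem~\ref{thm:teza} identifies $\Sigma^m$ as a minimal submanifold of the small hypersphere $\mathbb{S}^{n-1}(2c)$.

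Finally, at the threshold $|H|=\tfrac{m-2}{m}\sqrt{c}$ all intermediate inequalities are sharp: $\nabla A_{m+1}\equiv0$ and the equality case of Lemma~\ref{l:oku} forces $m-1$ of the $\mu_i$ to be equal and non-negative. A direct substitution of the value of $|H|$ shows that $A_{m+1}$ has exactly the two constant eigenvalues $\sqrt{c}$ (of multiplicity $m-1$) and $-\sqrt{c}$ (simple); since $A_{m+1}$ is parallel, its eigendistributions $D_1,D_2$ are parallel and a de Rham argument yields a local isometric splitting $\Sigma^m=\Sigma_1^{m-1}\times\Sigma_2^1$. Combining $\trace(A_{m+1}A_\alpha)=0$ with the Codazzi equation one then shows that each $A_\alpha$ with $\alpha\geq m+2$ respects this splitting; an Erbacher-type codimension reduction identifies $\Sigma^m$ as a standard product $\Sigma_1^{m-1}\times\mathbb{S}^1(2c)\subset\mathbb{S}^n(c)$ with $\Sigma_1^{m-1}$ minimal in $\mathbb{S}^{n-2}(2c)$, the converse direction being a direct mean-curvature computation for such a product. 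The main obstacle lies precisely in this last step: passing from the parallel two-eigenvalue structure of $A_{m+1}$ to a genuine global product decomposition demands careful bookkeeping of the interaction between the parallelism of $A_{m+1}$, the Codazzi identities, and the auxiliary operators $A_\alpha$ with $\alpha\geq m+2$, together with a codimension reduction to locate the product inside the correct totally geodesic subsphere.
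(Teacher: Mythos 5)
The paper does not actually prove this statement: it is imported verbatim from the reference \cite{BO} and used as a black box, so there is no internal proof to compare against. Judged on its own, your argument is essentially correct and is the standard route (it is also the same circle of ideas the paper itself deploys in the proof of Theorem \ref{thm:gap2}, there adapted to the product setting). Your reduction of the biharmonicity conditions to $\trace A_{m+1}^2=mc$, $\trace(A_{m+1}A_\alpha)=0$, the identity $0=|\nabla A_{m+1}|^2+2m^2|H|^2(c-|H|^2)+m|H|\sum_i\mu_i^3$, and the application of Lemma \ref{l:oku} all check out, and they do yield $m^2|H|^2\le(m-2)^2c$ whenever $0<|H|<\sqrt{c}$, plus the correct eigenvalue data $\sqrt{c}$ (multiplicity $m-1$) and $-\sqrt{c}$ at the threshold. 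Two small caveats. First, ``setting $T=0$ and $N=0$'' in Proposition \ref{p:delta} is formally inconsistent, since $|T|^2+|N|^2=1$; what you mean is either the intrinsic Nomizu--Smyth formula in $\mathbb{S}^n(c)$, or the product formula with $T=0$, $N=\xi$, in which all $N$-terms vanish because $A_\xi=0$ and $\langle H,\xi\rangle=0$ --- the outcome is the same, but the justification should be phrased that way. Second, the final rigidity step (from $\nabla A_{m+1}=0$ with two constant eigenvalues to the global standard product $\Sigma_1^{m-1}\times\mathbb{S}^1(2c)$ with $\Sigma_1^{m-1}$ minimal in $\mathbb{S}^{n-2}(2c)$) is left as a sketch; the ingredients you name (de Rham splitting of the parallel eigendistributions, compatibility of the $A_\alpha$ with the splitting via Codazzi and $\trace(A_{m+1}A_\alpha)=0$, and an Erbacher-type codimension reduction) are exactly the right ones, but this is where the real work of \cite{BO} lies and it would need to be carried out in full for a complete proof.
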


\begin{theorem}[\cite{BMO}] A submanifold $\Sigma^m$ in a Riemannian manifold $\bar M$, with second fundamental form $\sigma$,
mean curvature vector field $H$, and shape operator $A$, is biharmonic if and only if
\begin{equation}\label{eq:bih}
\begin{cases}
-\Delta^{\perp}H+\trace\sigma(\cdot,A_H\cdot)+\trace(\bar R(\cdot,H)\cdot)^{\perp}=0\\
\frac{m}{2}\grad|H|^2+2\trace A_{\nabla^{\perp}_{\cdot}H}(\cdot)+2\trace(\bar R(\cdot,H)\cdot)^{\top}=0,
\end{cases}
\end{equation}
where $\Delta^{\perp}$ is the Laplacian in the normal bundle and $\bar R$ is the curvature tensor of $\bar M$.
\end{theorem}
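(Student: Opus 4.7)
The plan is to exploit Jiang's characterization of $\tau_2$ together with the fact that for an isometric immersion $i:\Sigma^m\to\bar M$ the tension field is $\tau(i)=mH$. Substituting this into $\tau_2(i)=\Delta\tau(i)-\trace\bar R(di\cdot,\tau(i))di\cdot=0$ and dividing by $m$ reduces the problem to
$$
\Delta H-\trace\bar R(\cdot,H)\cdot=0,
$$
where $\Delta$ is the rough Laplacian along the immersion. Biharmonicity is equivalent to the simultaneous vanishing of the tangential and normal parts of this equation along $\Sigma^m$, and the proof will consist in computing each of these components and matching them against \eqref{eq:bih}.

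For the normal/tangential decomposition of $\Delta H$, I would pick a local orthonormal frame $\{e_i\}_{i=1}^m$ on $\Sigma^m$ that is geodesic at a fixed point $p$, and iterate the Weingarten formula $\bar\nabla_YH=-A_HY+\nabla^\perp_YH$. Differentiating once more and applying the Gauss formula to the tangential summand $-A_He_i$ and the Weingarten formula to the normal summand $\nabla^\perp_{e_i}H$ produces
$$
\bar\nabla_{e_i}\bar\nabla_{e_i}H=\bigl(-\nabla_{e_i}(A_He_i)-A_{\nabla^\perp_{e_i}H}e_i\bigr)+\bigl(-\sigma(e_i,A_He_i)+\nabla^\perp_{e_i}\nabla^\perp_{e_i}H\bigr).
$$
Summing over $i$ at $p$ (where $\nabla_{e_i}e_i=0$), the normal part of $\Delta H$ is $\Delta^\perp H-\trace\sigma(\cdot,A_H\cdot)$, and the tangential part is $-\trace\nabla_\cdot(A_H\cdot)-\trace A_{\nabla^\perp_\cdot H}(\cdot)$. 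Collecting the normal components of $\Delta H$ and of $\trace\bar R(\cdot,H)\cdot$ and multiplying by $-1$ gives the first line of \eqref{eq:bih} immediately.

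The tangential line requires an additional manipulation, since $\frac{m}{2}\grad|H|^2$ does not appear directly in the previous paragraph. I would derive it from the Codazzi equation of $\bar M$,
$$
(\nabla_X\sigma)(Y,Z)-(\nabla_Y\sigma)(X,Z)=-\bigl(\bar R(X,Y)Z\bigr)^\perp,
$$
by contracting $Y=Z=e_i$ and pairing with $H$. The sum $\sum_i\sigma(e_i,e_i)=mH$ produces $m\langle\nabla^\perp_XH,H\rangle=\tfrac{m}{2}X|H|^2$ on one side of the trace, while the remaining terms assemble into $\trace\nabla_\cdot(A_H\cdot)$, a further $\trace A_{\nabla^\perp_\cdot H}(\cdot)$, and a tangential curvature contribution. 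Substituting this identity into the tangential part of $\Delta H-\trace\bar R(\cdot,H)\cdot=0$ yields precisely the second line of \eqref{eq:bih}, including the coefficient $2$ in front of $\trace A_{\nabla^\perp_\cdot H}(\cdot)$.

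The main obstacle is the bookkeeping in this last step: one has to distinguish carefully between differentiating $A_V$ as a $(1,1)$-tensor with $V$ frozen and differentiating the composition $A_H$, in which $H$ itself is transported by $\nabla^\perp$; it is this second contribution that is responsible both for the extra $A_{\nabla^\perp_\cdot H}$ trace and for the doubling of coefficient appearing in the final tangential equation. Once the Codazzi-based identity is set up correctly at a point with a geodesic frame, both biharmonic equations fall out by comparing tangential and normal parts of $\tau_2(i)=0$, and the equivalence is established.
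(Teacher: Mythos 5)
The paper does not actually prove this statement; it is quoted verbatim from \cite{BMO} (see also \cite{On} and \cite{CMO} for the spherical case), so there is no in-paper argument to compare against. Your derivation is the standard one used in that literature and it is essentially correct: writing $\tau_2(i)=m\bigl(\trace\bar\nabla^2H-\trace\bar R(\cdot,H)\cdot\bigr)$, splitting $\trace\bar\nabla^2H$ into tangential and normal parts via iterated Gauss--Weingarten in a geodesic frame, and then converting $\sum_i\nabla_{e_i}(A_He_i)$ by the contracted Codazzi equation paired with $H$ does produce
$$
\sum_i\nabla_{e_i}(A_He_i)=\frac{m}{2}\grad|H|^2+\trace A_{\nabla^{\perp}_{\cdot}H}(\cdot)+\bigl(\trace\bar R(\cdot,H)\cdot\bigr)^{\top},
$$
using the curvature symmetry $\langle\bar R(e_i,X)e_i,H\rangle=\langle\bar R(e_i,H)e_i,X\rangle$; combined with the $A_{\nabla^{\perp}_{\cdot}H}$ and curvature terms already present in the tangential part of $\trace\bar\nabla^2H-\trace\bar R(\cdot,H)\cdot$, this yields exactly the coefficient $2$ in the second line of \eqref{eq:bih}, and the normal part gives the first line after multiplying by $-1$.

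One detail to fix: with the paper's conventions $\bar R(X,Y)Z=[\bar\nabla_X,\bar\nabla_Y]Z-\bar\nabla_{[X,Y]}Z$ and the Codazzi equation as written in Section \ref{s1}, one has $(\bar R(X,Y)Z)^{\perp}=(\nabla_X\sigma)(Y,Z)-(\nabla_Y\sigma)(X,Z)$ \emph{without} the minus sign you wrote. If you carry your sign through literally, the tangential curvature contribution from Codazzi cancels the one coming from $-\trace\bar R(\cdot,H)\cdot$ instead of doubling it, and you would get coefficient $0$ rather than $2$ in the second equation. This is a convention slip rather than a conceptual gap, but it must be made consistent for the bookkeeping in your last step to close.
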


Now, we have the following two corollaries.

\begin{corollary}\label{coro:bih_pmc}  A pmc submanifold $\Sigma^m$ in $M^n(c)\times\mathbb{R}$, with $m\geq 2$, is biharmonic if and only if
\begin{equation}\label{eq:bih_pmc}
\begin{cases}
H\perp\xi\\|A_H|^2=c(m-|T|^2)|H|^2\\ \trace(A_HA_U)=0\quad\textnormal{for any normal vector}\quad U\perp H.
\end{cases}
\end{equation}
\end{corollary}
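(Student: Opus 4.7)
The plan is to reduce the biharmonic system \eqref{eq:bih} under the pmc hypothesis, then split the resulting equations into tangent and normal parts using the product structure of $\bar M$.

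First, I will exploit the pmc assumptions $\nabla^{\perp}H = 0$ and $|H|=\cst$ to collapse \eqref{eq:bih}: the terms $\Delta^{\perp}H$, $A_{\nabla^{\perp}_{\cdot}H}(\cdot)$ and $\grad|H|^2$ all vanish, leaving a normal-valued equation coming from the first line of \eqref{eq:bih} and a tangent-valued equation coming from the second. Using \eqref{eq:barR} and decomposing $\xi = T + N$, a short computation with an orthonormal frame $\{E_i\}$ will give
\[
\sum_{i=1}^{m}\bar R(E_i,H)E_i = c\bigl\{-(m-|T|^2)H + (m-1)\langle H,N\rangle T + m\langle H,N\rangle N\bigr\},
\]
so that its tangent part is $c(m-1)\langle H,N\rangle T$ and its normal part is $c\{-(m-|T|^2)H + m\langle H,N\rangle N\}$. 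Substituting these into the reduced system produces two concrete equations that I then analyze in turn.

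The tangent equation reads $c(m-1)\langle H,N\rangle T = 0$, so pointwise either $\langle H,N\rangle=0$ or $T=0$. I expect the main obstacle to be upgrading this to $\langle H,N\rangle\equiv 0$, which is condition (a) (because $\langle H,\xi\rangle=\langle H,N\rangle$ when $H$ is normal). The argument I plan is: the open set $\Omega=\{\langle H,N\rangle\neq 0\}$ satisfies $T\equiv 0$ on $\Omega$, so $\xi$ is a parallel unit normal field there, and since the horizontal distribution of $M^n(c)\times\mathbb{R}$ is integrable with leaves $M^n(c)\times\{t\}$, the component $\Omega$ is forced to sit inside a slice $M^n(c)\times\{t_0\}$; but a submanifold of such a slice has its mean curvature tangent to $M^n(c)$, forcing $\langle H,\xi\rangle=0$ on $\Omega$, a contradiction. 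Hence $\Omega=\emptyset$ and (a) holds.

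Once (a) is established, the normal equation reduces to $\trace\sigma(\cdot,A_H\cdot)=c(m-|T|^2)H$. Taking the normal inner product against $H$ yields $|A_H|^2=c(m-|T|^2)|H|^2$, which is (b); taking it against an arbitrary normal $U\perp H$ yields $\trace(A_HA_U)=0$, which is (c). This finishes the forward implication. For the converse I will simply note that under (a) the tangent equation is automatic, while (b) and (c) together say exactly that the normal vector $\trace\sigma(\cdot,A_H\cdot)-c(m-|T|^2)H$ has vanishing inner product with every normal vector (parallel to $H$ by (b), perpendicular to $H$ by (c)), hence vanishes; this is the pmc-simplified first equation of \eqref{eq:bih}, so $\Sigma^m$ is biharmonic.
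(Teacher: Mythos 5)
Your proof is correct and follows essentially the same route as the paper: reduce \eqref{eq:bih} using the pmc hypothesis, compute $\trace\bar R(\cdot,H)\cdot$ from \eqref{eq:barR}, split into tangent and normal parts, and obtain $H\perp\xi$ by contradiction on the open set where $\langle H,\xi\rangle\neq 0$. The only cosmetic difference is in that contradiction step: the paper differentiates $\langle X,\xi\rangle=0$ using $\bar\nabla\xi=0$ to get $\langle\sigma(X,Y),\xi\rangle=0$ directly, whereas you invoke the equivalent geometric fact that $T\equiv 0$ forces each component to lie in a totally geodesic slice $M^n(c)\times\{t_0\}$.
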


\begin{proof} Since $\Sigma^m$ is a pmc submanifold, equations \eqref{eq:bih}
become
$$
\begin{cases}
\trace\sigma(\cdot,A_H\cdot)+\trace(\bar R(\cdot,H)\cdot)^{\perp}=0\\
\trace(\bar R(\cdot,H)\cdot)^{\top}=0
\end{cases}
$$
and, as from equation \eqref{eq:barR} we have
$$
\trace \bar R(\cdot,H)\cdot=c\{(m-1)\langle H,\xi\rangle T-(m-|T|^2)H+m\langle H,\xi\rangle N\},
$$
we see that $\Sigma^m$ is biharmonic if and only if
$$
\trace\sigma(\cdot,A_H\cdot)=c\{(m-|T|^2)H-m\langle H,\xi\rangle N\}\quad\textnormal{and}\quad\langle H,\xi\rangle T=0.
$$

Now, assume that there exists a point $p\in\Sigma^m$ such that $\langle H,\xi\rangle\neq 0$ at $p$, and then
$\langle H,\xi\rangle\neq 0$ on a neighborhood of $p$. It follows that $T=0$ on this neighborhood, i.e. $\langle X,\xi\rangle=0$ for any tangent vector field $X$. Since $\bar\nabla\xi=0$, we have
$$
0=\langle\bar\nabla_YX,\xi\rangle=\langle\sigma(X,Y),\xi\rangle
$$
for any tangent vector fields $X$ and $Y$. Thus $\langle H,\xi\rangle=0$ on a neighborhood of $p$, and, therefore, at $p$, which is a contradiction.
Consequently, we have that $H\perp\xi$ everywhere on $\Sigma^m$. Then, one obtains
$$
\trace\sigma(\cdot,A_H\cdot)=c(m-|T|^2)H,
$$
from where we get \eqref{eq:bih_pmc}.
\end{proof}

\begin{remark} A direct consequence of Corollary \ref{coro:bih_pmc} is that there are no proper-biharmonic pmc submanifolds in a product space $M^n(c)\times\mathbb{R}$ with $c\leq 0$.
\end{remark}

\begin{corollary}\label{rem:hyper} If $\Sigma^n$ is a proper-biharmonic cmc hypersurface
in $\mathbb{S}^n(c)\times\mathbb{R}$, then it
is $($an open part of$)$ a vertical cylinder $\pi^{-1}(\Sigma^{n-1})$, where
$\Sigma^{n-1}$ is a proper-biharmonic cmc hypersurface in
$\mathbb{S}^n(c)$. Moreover, if
\begin{enumerate}

\item $n=2$, then $\Sigma^1$ is a circle in $\mathbb{S}^2(c)$ with curvature equal to $\sqrt{c}$, and $|H|=\frac{1}{2}\sqrt{c}$$;$

\item $n=3$, then $\Sigma^2$ is an open part of a small hypersphere $\mathbb{S}^2(2c)\subset\mathbb{S}^3(c)$, and $|H|=\frac{2}{3}\sqrt{c}$$;$

\item $n>3$, then $|H|\in\big(0,\frac{n-3}{n}\sqrt{c}\big]\cup\big\{\frac{n-1}{n}\sqrt{c}\big\}$.
Furthermore,
\begin{enumerate}

\item[(a)] $|H|=\frac{n-3}{n}\sqrt{c}$ if and only if $\Sigma^{n-1}$ is an open part of the standard product $\mathbb{S}^{n-2}(2c)\times\mathbb{S}^1(2c)\subset\mathbb{S}^n(c)$$;$

\item[(b)] $|H|=\frac{n-1}{n}\sqrt{c}$ if and only if $\Sigma^{n-1}$ is an open part of a small hypersphere $\mathbb{S}^{n-1}(2c)\subset\mathbb{S}^n(c)$.
\end{enumerate}
\end{enumerate}
\end{corollary}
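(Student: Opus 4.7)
The strategy is to first show that $\Sigma^n$ must be a vertical cylinder, and then to transfer the existing classification of proper-biharmonic cmc hypersurfaces in $\mathbb{S}^n(c)$ to $\mathbb{S}^n(c)\times\mathbb{R}$ via that reduction. Since $\Sigma^n$ is a two-sided hypersurface, cmc is equivalent to pmc, so I can apply Corollary \ref{coro:bih_pmc} with $m=n$; the first equation in \eqref{eq:bih_pmc} reads $H\perp\xi$. Decomposing $\xi=T+N$ and noting that the rank-one normal bundle of $\Sigma^n$ is spanned by $H/|H|$ (which is well defined because proper-biharmonicity gives $|H|\neq 0$), this forces $N=0$, so $\xi$ is tangent to $\Sigma^n$. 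Therefore $\Sigma^n$ is (an open part of) a vertical cylinder $\pi^{-1}(\Sigma^{n-1})$ over a hypersurface $\Sigma^{n-1}\subset\mathbb{S}^n(c)$, and Remark \ref{rem:cyl} then gives that $\Sigma^{n-1}$ is itself a proper-biharmonic cmc hypersurface in $\mathbb{S}^n(c)$ with $|H|=\frac{n-1}{n}|H_0|$.

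The rest is a translation of known classifications. For $n=2$, $\Sigma^1$ is a proper-biharmonic curve in $\mathbb{S}^2(c)$; by \cite{CMO1} it is a circle of geodesic curvature $\sqrt{c}$, so $|H_0|=\sqrt{c}$ and $|H|=\tfrac{1}{2}\sqrt{c}$. For $n=3$, the proper-biharmonic cmc surfaces of $\mathbb{S}^3(c)$ are, again by \cite{CMO1}, either the small hypersphere $\mathbb{S}^2(2c)$ or the Clifford torus $\mathbb{S}^1(2c)\times\mathbb{S}^1(2c)$; the torus is minimal in $\mathbb{S}^3(c)$ and hence not proper-biharmonic, leaving $\mathbb{S}^2(2c)$, for which $|H_0|=\sqrt{c}$ and so $|H|=\tfrac{2}{3}\sqrt{c}$.

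For $n>3$, I would invoke Theorem \ref{thm:BO} on $\Sigma^{n-1}$ (with $m=n-1>2$) to obtain the range $|H_0|\in\bigl(0,\tfrac{n-3}{n-1}\sqrt{c}\bigr]\cup\{\sqrt{c}\}$, which multiplied by $\tfrac{n-1}{n}$ yields the stated range for $|H|$. The endpoint $|H_0|=\tfrac{n-3}{n-1}\sqrt{c}$ is characterized by Theorem \ref{thm:BO} as $\Sigma^{n-1}$ being an open part of $\Sigma_1^{n-2}\times\mathbb{S}^1(2c)$ with $\Sigma_1^{n-2}$ minimal in $\mathbb{S}^{n-2}(2c)$; since $\dim\Sigma_1^{n-2}=\dim\mathbb{S}^{n-2}(2c)$, the former must be an open part of the latter, giving (a). The endpoint $|H_0|=\sqrt{c}$ is handled by Theorem \ref{thm:teza}, which places $\Sigma^{n-1}$ as a minimal submanifold of some $\mathbb{S}^{n-1}(2c)\subset\mathbb{S}^n(c)$; the equal-dimension observation again promotes it to an open part of $\mathbb{S}^{n-1}(2c)$, giving (b). I do not foresee any real obstacle: the only analytic step is the deduction that $H\perp\xi$ forces $\xi$ to be tangent to $\Sigma^n$, after which the remainder is a direct appeal to previously established classification theorems.
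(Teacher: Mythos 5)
Your proposal is correct and follows essentially the same route as the paper: Corollary \ref{coro:bih_pmc} gives $H\perp\xi$, which for a hypersurface forces $\xi$ tangent and hence a vertical cylinder, after which Remark \ref{rem:cyl} and the known classifications (the biharmonic-curve result for $n=2$, \cite[Theorem~4.8]{CMO1} for $n=3$, and Theorems \ref{thm:teza} and \ref{thm:BO} for $n>3$) finish the argument. The only difference is bibliographic: for $n=2$ the relevant classification of proper-biharmonic curves in $\mathbb{S}^2(c)$ is the one in \cite{CMP} rather than \cite{CMO1}.
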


\begin{proof} From Corollary \ref{coro:bih_pmc}, we get that the mean
curvature vector field $H$ of our submanifold is orthogonal to
$\xi$, which means that $\xi$ is tangent to $\Sigma^n$. Therefore,
$\Sigma^n$ is a vertical cylinder $\Sigma^{n-1}\times\mathbb{R}$,
where $\Sigma^{n-1}$ is a proper-biharmonic cmc hypersurface in
$\mathbb{S}^n(c)$, with mean curvature vector field $H_0$ satisfying
$H=\frac{n-1}{n} H_0$, as we know from Remark \ref{rem:cyl}.

Now, when $n\in\{2,3\}$, the main result in \cite{CMP} and
\cite[Theorem~4.8]{CMO1} lead to $(1)$ and $(2)$, respectively,
and when $n>3$, we use Theorem \ref{thm:BO} to prove $(3)$.
\end{proof}

\begin{proposition} Let $\Sigma^m$ be a proper-biharmonic pmc submanifold in $\mathbb{S}^n(c)\times\mathbb{R}$, with $m\geq 2$. Then its second fundamental form $\sigma$ satisfies $|\sigma|^2\geq c(m-1)$, and the equality holds if and only if $\Sigma^m$ is a vertical cylinder $\pi^{-1}(\Sigma^{m-1})$ in $\mathbb{S}^{m}(c)\times\mathbb{R}$, where $\Sigma^{m-1}$ is a proper biharmonic cmc hypersurface in $\mathbb{S}^{m}(c)$.
\end{proposition}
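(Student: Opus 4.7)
The estimate will come directly from Corollary \ref{coro:bih_pmc}. Since $\Sigma^m$ is proper-biharmonic and pmc, $|H|$ is a positive constant, so one may choose an adapted local orthonormal frame $\{E_\alpha\}_{\alpha=m+1}^{n+1}$ in the normal bundle with $E_{m+1}=H/|H|$. The second identity of \eqref{eq:bih_pmc} then reads
$$
|A_{m+1}|^2=\frac{|A_H|^2}{|H|^2}=c(m-|T|^2),
$$
and discarding the nonnegative contributions of the remaining $|A_\alpha|^2$ in $|\sigma|^2=\sum_{\alpha=m+1}^{n+1}|A_\alpha|^2$, together with the pointwise bound $|T|^2\le|\xi|^2=1$, yields
$$
|\sigma|^2\ge|A_{m+1}|^2=c(m-|T|^2)\ge c(m-1).
$$

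For the equality case, both inequalities must be sharp, forcing $|T|^2=1$ and $A_\alpha=0$ for every $\alpha>m+1$. The first condition says $N=0$, i.e. $\xi$ is tangent to $\Sigma^m$, hence $\Sigma^m=\pi^{-1}(\Sigma^{m-1})$ is a vertical cylinder; by Remark \ref{rem:cyl} the base $\Sigma^{m-1}$ is then a proper-biharmonic pmc submanifold of $\mathbb{S}^n(c)$. The second condition transports, under the product splitting $\Sigma^m=\Sigma^{m-1}\times\mathbb{R}$, into the analogous vanishing of the shape operators of $\Sigma^{m-1}$ in every normal direction $U\perp H_0$. Combined with $\nabla^{\perp}H_0=0$, this means the first normal bundle of $\Sigma^{m-1}$ is one-dimensional and parallel, so Erbacher's reduction of codimension places $\Sigma^{m-1}$ inside a totally geodesic $\mathbb{S}^m(c)\subset\mathbb{S}^n(c)$ as a proper-biharmonic cmc hypersurface.

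The converse is a direct check: if $\Sigma^{m-1}$ is a proper-biharmonic cmc hypersurface in $\mathbb{S}^m(c)$, the cmc-hypersurface biharmonic equation forces $|A^{\Sigma^{m-1}}|^2=(m-1)c$, and the cylinder $\Sigma^m=\pi^{-1}(\Sigma^{m-1})$ carries a single nontrivial shape operator, which differs from $A^{\Sigma^{m-1}}$ only by an extra zero eigenvalue along $\xi$; hence $|\sigma|^2=(m-1)c$. The main technical step is the codimension reduction, but it is standard once one notes that the biharmonic-pmc hypotheses supply exactly the two ingredients Erbacher's theorem requires.
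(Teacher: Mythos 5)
Your proof is correct and follows essentially the same route as the paper: the chain $|\sigma|^2\ge|A_{H/|H|}|^2=c(m-|T|^2)\ge c(m-1)$ and the equality analysis forcing $|T|=1$ (hence a vertical cylinder) and $A_U=0$ for every $U\perp H$ are exactly the paper's argument. The only divergence is the codimension-reduction step --- the paper applies the Eschenburg--Tribuzy theorem directly to $\Sigma^m$ in $\mathbb{S}^n(c)\times\mathbb{R}$ with the parallel rank-one bundle $L=\Span\{H\}$, whereas you descend to the base $\Sigma^{m-1}\subset\mathbb{S}^n(c)$ and invoke Erbacher --- both are valid, and your explicit check of the converse direction (left implicit in the paper) is a welcome addition.
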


\begin{proof} From the first equation of \eqref{eq:bih_pmc}, we have
$$
|\sigma|^2\geq|A_{\frac{H}{|H|}}|^2=c(m-|T|^2)\geq c(m-1).
$$

Thus, $|\sigma|^2=c(m-1)$ if and only if $|T|=1$ at every point on $\Sigma^m$, i.e. $\Sigma^m$ is a vertical cylinder $\pi^{-1}(\Sigma^{m-1})$, and $A_V=0$ for any normal vector field $V$ orthogonal to $H$.

Next, let us consider the subbundle $L=\Span\{\im\sigma\}$, and we
see that $L$ is parallel in the normal bundle and $\dim L=1$, since
actually $L=\Span\{H\}$, and that $\bar R(X,Y)Z\in T\Sigma^m\oplus
L$, for any $X,Y,Z\in T\Sigma^m\oplus L$. Therefore, using
\cite[Theorem~2]{ET}, we get that
the cylinder $\Sigma^m$ lies in an
$(m+1)$-dimensional totally geodesic submanifold of
$\mathbb{S}^n(c)\times\mathbb{R}$, i.e. it is a vertical cylinder in
$\mathbb{S}^{m}(c)\times\mathbb{R}$.
\end{proof}

\begin{proposition} Let $\Sigma^m$ be a proper-biharmonic pmc submanifold in $\mathbb{S}^n(c)\times\mathbb{R}$, with $m\geq 2$. Then its mean curvature satisfies $|H|^2\leq c$, and the equality holds if and only if $\Sigma^m$ is minimal in a small hypersphere $\mathbb{S}^{n-1}(2c)\subset\mathbb{S}^n(c)$.
\end{proposition}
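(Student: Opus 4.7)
The plan is to combine the biharmonicity characterization in Corollary \ref{coro:bih_pmc} with the Cauchy-Schwarz inequality applied to the self-adjoint endomorphism $A_H$. From the second equation of \eqref{eq:bih_pmc} we have $|A_H|^2=c(m-|T|^2)|H|^2$, while by definition $\trace A_H = m|H|^2$. Since $A_H$ acts on an $m$-dimensional vector space, the standard inequality $(\trace A_H)^2\leq m|A_H|^2$ yields
\[
m^2|H|^4\;\leq\; m\cdot c(m-|T|^2)|H|^2\;\leq\; cm^2|H|^2,
\]
so, as $\Sigma^m$ is proper-biharmonic and hence $|H|\neq 0$, we conclude $|H|^2\leq c$.

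For the equality case $|H|^2=c$, both of the inequalities above must be equalities. Equality in Cauchy-Schwarz forces $A_H$ to be a scalar multiple of $\id$, and combined with $\trace A_H=m|H|^2=mc$ gives $A_H=c\,\id$. Equality in the second inequality forces $|T|^2=0$, that is, $\xi$ is everywhere normal to $\Sigma^m$. Since $T=\grad h$ for the height function $h=t\circ i$ introduced in the proof of Proposition \ref{thm:gap}, a connected $\Sigma^m$ with $T\equiv 0$ is forced to lie in a single slice $\mathbb{S}^n(c)\times\{t_0\}$. We may therefore regard $\Sigma^m$ as a proper-biharmonic cmc submanifold of $\mathbb{S}^n(c)$ with mean curvature $|H|=\sqrt{c}$, and Theorem \ref{thm:teza} immediately concludes that $\Sigma^m$ is minimal in a small hypersphere $\mathbb{S}^{n-1}(2c)\subset\mathbb{S}^n(c)$.

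The converse is straightforward: a minimal submanifold $\Sigma^m$ of $\mathbb{S}^{n-1}(2c)\subset\mathbb{S}^n(c)$ is proper-biharmonic with $|H|=\sqrt{c}$ in $\mathbb{S}^n(c)$ by Theorem \ref{thm:teza}, and since a slice $\mathbb{S}^n(c)\times\{t_0\}$ is totally geodesic in $\mathbb{S}^n(c)\times\mathbb{R}$, both the pmc and proper-biharmonic properties are preserved upon viewing $\Sigma^m$ as a submanifold of the product. No part of the argument requires serious computation; the only point worth isolating is the reduction $T\equiv 0\Rightarrow\Sigma^m\subset\mathbb{S}^n(c)\times\{t_0\}$, which is nothing more than the fact that $T=\grad h$ together with connectedness of $\Sigma^m$.
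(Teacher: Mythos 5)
Your proof is correct and follows essentially the same route as the paper: the paper's key inequality $|A_H|^2\geq m|H|^4$ is precisely your Cauchy--Schwarz estimate $(\trace A_H)^2\leq m|A_H|^2$ combined with $\trace A_H=m|H|^2$, and the equality case is settled identically by deducing $T\equiv 0$, placing $\Sigma^m$ in a slice $\mathbb{S}^n(c)$, and invoking Theorem \ref{thm:teza}. Your extra remarks on the converse and on $T=\grad h$ are fine but add nothing beyond what the paper's one-line argument already contains.
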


\begin{proof} Since $|A_H|^2\geq m|H|^4$, from the first equation of \eqref{eq:bih_pmc}, we get that
$$
c(m-|T|^2)\geq m|H|^2,
$$
and then $|H|^2\leq c$. The equality holds if and only if $T=0$,
which means that $\Sigma^m$ lies in $\mathbb{S}^n$. Thus, using
Theorem \ref{thm:teza}, we come to the conclusion.
\end{proof}

Now, for the sake of simplicity, we shall consider only the case
$c=1$, and we are ready to prove the first of our main results.

\begin{theorem}\label{thm:gap2} Let $\Sigma^m$ be a complete proper-biharmonic pmc submanifold
in $\mathbb{S}^n\times\mathbb{R}$, with $m\geq 2$, such that its
mean curvature satisfies
\begin{equation}\label{eq:ineq}
|H|^2>C(m)=\frac{(m-1)(m^2+4)+(m-2)\sqrt{(m-1)(m-2)(m^2+m+2)}}{2m^3},
\end{equation}
and the norm of its second fundamental form $\sigma$ is bounded.
Then $m<n$, $|H|=1$ and $\Sigma^m$ is a minimal submanifold of a
small hypersphere $\mathbb{S}^{n-1}(2)\subset\mathbb{S}^n$.
\end{theorem}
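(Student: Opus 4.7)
The strategy is to apply Proposition~\ref{p:delta} with $V=H$, collapse the resulting identity using the biharmonic conditions of Corollary~\ref{coro:bih_pmc}, bound the cubic and $T$-dependent terms via Okumura's Lemma~\ref{l:oku} and Cauchy--Schwarz, and then invoke the Omori--Yau principle (Theorem~\ref{OY}) to extract an algebraic obstruction that forces $|H|=1$. Once $|H|=1$, the biharmonic equation makes $|T|\equiv 0$, so $\Sigma^m$ sits in a horizontal slice $\mathbb{S}^n\times\{t_0\}$, and the equality case of the bound $|H|^2\le c$ proved above (equivalently, Theorem~\ref{thm:teza}) identifies it.

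The choice $V=H$ is admissible since $\nabla^{\perp}H=0$ and $\trace A_H=m|H|^2$ is constant. Taking $c=1$, a normal frame with $E_{m+1}=H/|H|$, and using $H\perp\xi$ (whence $\langle H,N\rangle=0$), the second biharmonic equation $|A_H|^2=(m-|T|^2)|H|^2$, and the third biharmonic equation $\trace(A_HA_U)=0$ for normal $U\perp H$, formula \eqref{eq:delta2} collapses: the terms indexed by $\alpha>m+1$ vanish, and the $(m-|T|^2)|A_H|^2$ piece cancels the surviving $(\trace(A_HA_{m+1}))^2=|H|^2(m-|T|^2)^2$ term. One is left with
$$
\tfrac12\Delta|A_H|^2 = |\nabla A_H|^2 + m|H|^3\trace A_{m+1}^3 - 2m|A_HT|^2 + 3m|H|^2\langle A_HT,T\rangle - m^2|H|^4.
$$
Decomposing $A_{m+1}=|H|\,\id+\phi$ with $\trace\phi=0$ gives $|\phi|^2=m(1-|H|^2)-|T|^2\geq 0$ (forced by $|A_H|^2\geq m|H|^4$), and Okumura's Lemma~\ref{l:oku} applied to the eigenvalues of $\phi$ yields
$$
m|H|^3\trace A_{m+1}^3 \geq -\tfrac{m(m-2)}{\sqrt{m(m-1)}}|H|^3|\phi|^3 + 3m|H|^4|\phi|^2 + m^2|H|^6.
$$
Cauchy--Schwarz bounds $|A_HT|^2\leq(m-|T|^2)|H|^2|T|^2$ and $|\langle A_HT,T\rangle|\leq|H|\sqrt{m-|T|^2}\,|T|^2$ control the $T$-terms, producing a pointwise lower bound
$$
\tfrac12\Delta|A_H|^2 \geq |\nabla A_H|^2 + \mathcal{F}\bigl(|H|^2,|T|^2\bigr),
$$
for an explicit $\mathcal{F}$ whose dominant positive contribution is $2m^2|H|^4(1-|H|^2)$.

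Since $|\sigma|$ is bounded, the Gauss equation makes $\ric$ bounded from below; combined with $|A_H|^2\leq m|H|^2$, the Omori--Yau principle provides $\{p_k\}$ with $|A_H|^2(p_k)\to\sup|A_H|^2$ and $\Delta|A_H|^2(p_k)<1/k$, so passing to the limit gives $\mathcal{F}(|H|^2,\tau)\leq 0$ for $\tau:=\inf|T|^2\in[0,m(1-|H|^2)]$. The key algebraic step is to show that, assuming $|H|^2<1$, the map $t\mapsto\mathcal{F}(|H|^2,t)$ is strictly positive on $[0,m(1-|H|^2)]$ exactly when $|H|^2>C(m)$: after isolating the $(1-|H|^2)$ factors and squaring to remove the radical from the $|\phi|^3$ term, one is left with a quadratic inequality in $|H|^2$ whose larger root is precisely $C(m)$. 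This contradicts $\mathcal{F}(|H|^2,\tau)\leq 0$, forcing $|H|^2=1$. Then $|A_H|^2\geq m|H|^4=m$ together with $|A_H|^2=m-|T|^2$ gives $|T|\equiv 0$, so $\nabla h=T=0$ and $\Sigma^m\subset\mathbb{S}^n\times\{t_0\}$. The equality case of $|H|^2\leq c$ (or Theorem~\ref{thm:teza}) then identifies $\Sigma^m$ as a minimal submanifold of the small hypersphere $\mathbb{S}^{n-1}(2)\subset\mathbb{S}^n$, which automatically gives $m\leq n-1$. The principal difficulty is the algebraic optimization verifying that $C(m)$ is the sharp threshold in the analysis of $\mathcal{F}(|H|^2,\cdot)$.
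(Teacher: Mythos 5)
Your architecture (collapse the Simons formula \eqref{eq:delta2} with $V=H$ via \eqref{eq:bih_pmc}, apply Okumura's Lemma \ref{l:oku}, invoke Omori--Yau) is the paper's, and your collapsed identity is correct. The genuine gap is in how you treat the terms $-2m|A_HT|^2+3m|H|^2\langle A_HT,T\rangle$. The paper first observes that $H\perp\xi$, $\nabla^{\perp}H=0$ and $\bar\nabla\xi=0$ give
$$
0=\langle\bar\nabla_XH,\xi\rangle=-\langle A_HX,T\rangle,
$$
i.e. $A_HT=0$, so these terms vanish \emph{identically}; the same identity is needed a second time, since $\phi_HT=-|H|^2T$ is what yields the exact relation $|T|^2|H|^4=|\phi_HT|^2\leq|T|^2|\phi_H|^2$. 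Your Cauchy--Schwarz bounds $|A_HT|^2\leq(m-|T|^2)|H|^2|T|^2$ and $|\langle A_HT,T\rangle|\leq|H|\sqrt{m-|T|^2}\,|T|^2$ replace quantities that are actually zero by error terms of the same order as (in fact larger than) everything else in the formula, and the resulting $\mathcal{F}(|H|^2,|T|^2)$ is \emph{not} positive on the admissible range when $|H|^2$ is just above $C(m)$. For instance, for $m=2$, $|H|^2=0.6>C(2)=\tfrac12$, $|T|^2=0.5$ (admissible, since $m(1-|H|^2)=0.8$), your lower bound evaluates to roughly $0.648+0.864-1.8-1.71-1.44\approx-3.4<0$, whereas the paper's bound $2|\phi_H|^2(2|H|^2-|T|^2)$ is positive there. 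Hence the asserted ``quadratic inequality in $|H|^2$ whose larger root is precisely $C(m)$'' cannot be extracted from your $\mathcal{F}$: the constant $C(m)$ is exactly the larger root of $P(1)=m^3x^2-(m-1)(m^2+4)x+m(m-1)$, and this polynomial only emerges after the $T$-terms have been killed by $A_HT=0$.

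Two smaller points. First, once a clean inequality $\tfrac12\Delta|\phi_H|^2\geq\mathrm{const}\cdot|\phi_H|^2$ with positive constant is available, the paper's Omori--Yau step concludes $\phi_H\equiv0$ (pseudo-umbilicity), then $0=A_HT=|H|^2T$ forces $T=0$, so $\Sigma^m$ lies in $\mathbb{S}^n$ and $|H|=1$ follows from Theorems \ref{thm:teza} and \ref{thm:BO}, not directly from the Simons inequality; your alternative route (contradiction for $|H|^2<1$, then $|T|=0$ from $|A_H|^2\geq m|H|^4$) is structurally viable but only after the positivity you assert is actually established. Second, the case $m=2$ needs separate treatment, since the Okumura term degenerates and the rational-function manipulation producing $P$ is specific to $m\geq3$; the paper handles this by a direct computation expressing $2|H|^2-|T|^2$ in terms of $|\phi_H|^2$.
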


\begin{proof} From Corollary \ref{coro:bih_pmc}, we have that $\langle
H,\xi\rangle=0$, which implies
$$
0=\langle \bar\nabla_XH,\xi\rangle=-\langle A_HX,T\rangle=-\langle A_HT,X\rangle
$$
for any tangent vector field $X$, and then $A_HT=0$. Therefore, if we consider a local orthonormal frame field $\big\{E_{m+1}=\frac{H}{|H|},\ldots, E_{n+1}\big\}$ in the normal bundle, using Proposition \ref{p:delta} and equation \eqref{eq:bih_pmc}, we get
\begin{equation}\label{eq:deltaAH}
\frac{1}{2}\Delta |A_H|^2=|\nabla A_H|^2+m(\trace A_H^3)-m^2|H|^4.
\end{equation}

Let us consider $\phi_H=A_H-|H|^2\id$ the traceless part of
$A_H$. We have
$$
\trace A_H^3=\trace \phi_H^3+3|H|^2|\phi_H|^2+m|H|^6,
$$
and, using the first equation of \eqref{eq:bih_pmc},
$$
|\phi_H|^2=|A_H|^2-m|H|^4=(m-|T|^2)|H|^2-m|H|^4.
$$
Replacing in equation \eqref{eq:deltaAH}, one obtains
$$
\frac{1}{2}\Delta |\phi_H|^2=|\nabla \phi_H|^2+m(\trace\phi_H^3)+3m|H|^2|\phi_H|^2-m^2|H|^4(1-|H|^2).
$$
Using Lemma \ref{l:oku}, we get
$$
\trace\phi_H^3\geq-\frac{m-2}{\sqrt{m(m-1)}}|\phi_H|^3,
$$
and then, since $|T|^2|H|^4=|\phi_HT|^2\leq|T|^2|\phi_H|^2$,
\begin{equation}\label{eq:ineqdelta}
\begin{array}{ll}
\frac{1}{2}\Delta |\phi_H|^2&\geq-\frac{m(m-2)}{\sqrt{m(m-1)}}|\phi_H|^3+3m|H|^2|\phi_H|^2-m^2|H|^4(1-|H|^2)\\ \\
&=-\frac{m(m-2)}{\sqrt{m(m-1)}}|\phi_H|^3+2m|H|^2|\phi_H|^2-m|T|^2|H|^4\\ \\
&\geq-\frac{m(m-2)}{\sqrt{m(m-1)}}|\phi_H|^3+2m|H|^2|\phi_H|^2-m|T|^2|\phi_H|^2\\ \\
&=m|\phi_H|^2\Big(-\frac{m-2}{\sqrt{m(m-1)}}|\phi_H|+2|H|^2-|T|^2\Big).
\end{array}
\end{equation}

Now, we shall split our study in two cases, as $m\geq 3$ or $m=2$.

\noindent\textbf{Case I: $m\geq 3$.} If $2|H|^2-|T|^2>0$, then we can write
$$
-\frac{m-2}{\sqrt{m(m-1)}}|\phi_H|+2|H|^2-|T|^2=\frac{1}{m(m-1)}\frac{P(|T|^2)}{\frac{m-2}{\sqrt{m(m-1)}}|\phi_H|+2|H|^2-|T|^2},
$$
where $P(t)$ is a polynomial with constant coefficients, given by
$$
P(t)=m(m-1)t^2-(3m^2-4)|H|^2t+m|H|^2(m^2|H|^2-(m-2)^2).
$$

By using elementary arguments, we obtain that, if $|H|^2>C(m)$,
then $P(t)\geq P(1)>0$ for any $t\in(-\infty,1]$.

Since $C(m)>\frac{1}{2}$ for any $m\geq 3$, our hypothesis $|H|^2>C(m)$ implies that $2|H|^2-|T|^2>0$, and then, from \eqref{eq:ineqdelta}, we get
\begin{equation}\label{delta:poly}
\begin{array}{ll}
\frac{1}{2}\Delta|\phi_H|^2&\geq \frac{mP(|T|^2)}{\sqrt{m(m-1)}((m-2)|\phi_H|+\sqrt{m(m-1)}(2|H|^2-|T|^2))}|\phi_H|^2\\ \\
&\geq \frac{P(|T|^2)}{\sqrt{m-1}|H|((m-2)\sqrt{1-|H|^2}+2\sqrt{m-1}|H|)}|\phi_H|^2\\ \\&\geq \frac{P(1)}{\sqrt{m-1}|H|((m-2)\sqrt{1-|H|^2}+2\sqrt{m-1}|H|)}|\phi_H|^2\\ \\&\geq 0.
\end{array}
\end{equation}

Next, let us consider a local orthonormal frame field $\{E_i\}_{i=1}^{m}$ on $\Sigma^m$, $X$ a unit tangent vector field, and $\big\{E_{m+1}=\frac{H}{|H|},\ldots,E_{n+1}\big\}$ an orthonormal frame field in the normal bundle. Using equation \eqref{eq:R}, we can compute the Ricci curvature of our submanifold
$$
\begin{array}{lll}
\ric X&=&\sum_{i=1}^m\langle R(E_i,X)X,E_i\rangle\\ \\
&=&\sum_{i=1}^m\{|X|^2-\langle X,E_i\rangle^2-\langle X,T\rangle^2+2\langle X,T\rangle\langle T, E_i\rangle\langle X, E_i\rangle\\ \\&&-\langle T, E_i\rangle|X|^2
+\sum_{\alpha=m+1}^{n+1}(\langle A_{\alpha}E_i,E_i\rangle\langle A_{\alpha}X,X\rangle-\langle A_{\alpha}X,E_i\rangle^2)\}
\\ \\&=&
m-1-|T|^2-(m-2)\langle X,T\rangle^2+m\langle A_HX,X\rangle-\sum_{\alpha=m+1}^{n+1}|A_{\alpha}X|^2,
\end{array}
$$
and then, it follows that
$$
\begin{array}{ll}
\ric X&\geq (m-1)(1-|T|^2)-m|A_HX|-\sum_{\alpha=m+1}^{n+1}|A_{\alpha}|^2\\ \\ &\geq-m|A_H|-|\sigma|^2.
\end{array}
$$
Since by hypothesis we know that
$|\sigma|$ is bounded, we can see that the Ricci curvature of $\Sigma^m$ is bounded
from below, and then the Omori-Yau Maximum Principle holds on our
submanifold.

Therefore, we can use Theorem \ref{OY} with $u=|\phi_H|^2$.
It follows that there exists a sequence of points
$\{p_k\}_{k\in\mathbb{N}}\subset \Sigma^m$ satisfying
$$
\lim_{k\rightarrow\infty}|\phi_H|^2(p_k)=\sup_{\Sigma^m}|\phi_H|^2\quad\textnormal{and}\quad\Delta|\phi_H|^2(p_k)<\frac{1}{k}.
$$
Since $P(1)>0$, from \eqref{delta:poly}, we get that
$0=\lim_{k\rightarrow\infty}|\phi_H|^2(p_k)=\sup_{\Sigma^m}|\phi_H|^2$,
which means that $\phi_H=0$, i.e. $\Sigma^m$ is pseudo-umbilical.

Now, since $A_HT=0$, we have $0=A_HT=|H|^2T$, i.e. $T=0$ on
$\Sigma^m$, and therefore $\Sigma^m$ lies in $\mathbb{S}^n$, which
also implies that $m<n$. Since
$|H|^2>C(m)>(\frac{m-1}{m})^2>(\frac{m-2}{m})^2$, using Theorems
\ref{thm:teza} and \ref{thm:BO}, we come to the conclusion.

\noindent\textbf{Case II: $m=2$.} In this case, from equation \eqref{eq:ineqdelta}, we have
$$
\frac{1}{2}\Delta|\phi_H|^2\geq 2|\phi_H|^2(2|H|^2-|T|^2)=\frac{2|\phi_H|^2}{|H|^2}(|\phi_H|^2+2|H|^2(2|H|^2-1)).
$$
Now, since $|H|^2>C(2)=\frac{1}{2}$, working as in the first case, we conclude.
\end{proof}

\begin{remark} We note that, in the case of proper-biharmonic pmc
surfaces in $\mathbb{S}^n\times\mathbb{R}$, if we take $|H|^2\geq
C(2)$, then the conclusion of Theorem \ref{thm:gap2} remains
unchanged.
\end{remark}

\section{Biharmonic pmc surfaces in $\mathbb{S}^n(c)\times\mathbb{R}$}

Before proving the second main theorem of this paper we need some preliminary results.

First, we note that the map $p\in\Sigma^2\rightarrow(A_H-\mu\id)(p)$, where $\mu$ is a constant,
is analytic, and, therefore, either $\Sigma^2$ is a pseudo-umbilical surface (at every point), or $H(p)$ is not an umbilical direction for any point $p$, or $H(p)$ is an umbilical direction on a closed set without interior points. We shall denote by $W$ the set of points where $H$ is not an umbilical direction. In the second case, $W$ coincides with $\Sigma^2$, and in the third one, $W$ is an open dense set in $\Sigma^2$.

As the authors observed in \cite[Lemma 1]{AdCT}, we have that, if $\Sigma^2$ is a pmc surface in $\mathbb{S}^n(c)\times\mathbb{R}$, with mean curvature vector field $H$, then either $\Sigma^2$ is
pseudo-umbilical, i.e. $H$ is an umbilical direction everywhere, or, at any point in $W$, there exists a local orthonormal frame field that diagonalizes $A_U$ for
any normal vector field $U$ defined on $W$.

If $\Sigma^2$ is a pseudo-umbilical pmc surface in $\mathbb{S}^n(c)\times\mathbb{R}$, then it was proved in \cite[Lemma 3]{AdCT} that it lies in $\mathbb{S}^n(c)$, and, therefore, $\Sigma^2$ is minimal in a small hypersphere of $\mathbb{S}^n(c)$.

\begin{lemma}\label{lemma:5.1} Let $\Sigma^2$ be a pmc surface in $\mathbb{S}^n(c)\times\mathbb{R}$. Then $\Sigma^2$ is proper-biharmonic if and only if either
\begin{enumerate}
\item $\Sigma^2$ is pseudo-umbilical and, therefore, it is a minimal surface of a small hypersphere $\mathbb{S}^{n-1}(2c)\subset\mathbb{S}^n(c)$$;$ or

\item the mean curvature vector field $H$ is orthogonal to $\xi$, $|A_H|^2=c(2-|T|^2)|H|^2$, and $A_U=0$ for any normal vector field $U$ orthogonal to $H$.
\end{enumerate}
\end{lemma}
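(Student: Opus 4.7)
The plan is to apply Corollary~\ref{coro:bih_pmc} with $m=2$, which, for pmc submanifolds, reduces biharmonicity to three conditions: $H\perp\xi$, $|A_H|^2=c(2-|T|^2)|H|^2$, and $\trace(A_HA_U)=0$ for every normal $U\perp H$. Two of these already match the statements of the lemma, so the crux is to translate the third trace condition, via the dichotomy recalled from \cite{AdCT} just before the lemma, into either the pseudo-umbilical alternative (case (1)) or the stronger vanishing $A_U=0$ of case (2).

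For the forward implication I split along this dichotomy. In the pseudo-umbilical case $A_H=|H|^2\id$, so the third condition reads $\trace(A_HA_U)=2|H|^2\langle H,U\rangle=0$, which holds automatically for $U\perp H$. By \cite[Lemma~3]{AdCT} a pseudo-umbilical pmc surface of $\mathbb{S}^n(c)\times\mathbb{R}$ lies in $\mathbb{S}^n(c)$, so $T=0$ and the mean curvature relation collapses to $|H|^2=c$. Theorem~\ref{thm:teza} then places $\Sigma^2$ as a minimal surface of a small hypersphere $\mathbb{S}^{n-1}(2c)\subset\mathbb{S}^n(c)$, which is case (1).

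In the non-pseudo-umbilical case the open set $W$ on which $H$ fails to be umbilical is dense. At each point of $W$ I fix the local orthonormal frame simultaneously diagonalizing all $A_U$ supplied by \cite[Lemma~1]{AdCT}, so that $A_H=\operatorname{diag}(\mu_1,\mu_2)$ with $\mu_1\neq\mu_2$, and, for each $U\perp H$, $A_U=\operatorname{diag}(\lambda_1,\lambda_2)$ with $\lambda_1+\lambda_2=2\langle H,U\rangle=0$. The third biharmonic condition then becomes $(\mu_1-\mu_2)\lambda_1=0$, forcing $\lambda_1=\lambda_2=0$ on $W$; the density of $W$ together with continuity of $A_U$ propagates $A_U\equiv 0$ to all of $\Sigma^2$, giving case (2).

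For the converse, case (2) is immediate because $A_U=0$ makes the third biharmonic condition trivial, so Corollary~\ref{coro:bih_pmc} applies directly. In case (1) I would verify the three conditions by hand: the horizontality of $\Sigma^2\subset\mathbb{S}^n(c)\times\{0\}$ gives $T=0$ and $H\perp\xi$; the total umbilicity of $\mathbb{S}^{n-1}(2c)$ in $\mathbb{S}^n(c)$ combined with minimality of $\Sigma^2$ in $\mathbb{S}^{n-1}(2c)$ makes $A_H=c\,\id$, so $|A_H|^2=2c^2=c(2-|T|^2)|H|^2$; and pseudo-umbilicality then kills the trace condition. The main obstacle is the non-pseudo-umbilical direction: extracting the stronger vanishing $A_U=0$ from the bilinear trace relation by means of the simultaneous diagonalization, and then extending it from the dense open set $W$ to the whole surface by continuity.
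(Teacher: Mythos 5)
Your argument is correct and follows essentially the same route as the paper: reduce to Corollary~\ref{coro:bih_pmc} with $m=2$, split along the pseudo-umbilical dichotomy, and in the non-pseudo-umbilical case use the simultaneous diagonalization on the dense set $W$ to turn $\trace(A_HA_U)=0$ into $(\mu_1-\mu_2)\lambda_1=0$, hence $A_U=0$, extended by continuity. The only cosmetic difference is in case (1), where the paper invokes \cite[Theorem~3.4]{CMO} to settle both directions of the pseudo-umbilical case at once, whereas you derive $|H|^2=c$ from pseudo-umbilicality together with $T=0$ and then use Theorem~\ref{thm:teza} for one direction and a direct verification of the three conditions for the other; both are sound.
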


\begin{proof} As we have seen, in the first case, $\Sigma^2$ is a minimal surface in a small hypersphere of $\mathbb{S}^n(c)$, and then the conclusion follows from \cite[Theorem~3.4]{CMO}.

Assume now that $\Sigma^2$ is not pseudo-umbilical. In the following, we shall work on the set $W$ defined above. Let $p$ be an arbitrary point in $W$ and consider $\{e_1,e_2\}$ an orthonormal basis at $p$ that diagonalizes $A_H$ and $A_U$ for any normal vector $U$ orthogonal to $H$. Since $H\perp U$, it follows that $\trace A_U=2\langle H,U\rangle=0$. The matrices of $A_H$ and $A_U$ with respect to $\{e_1, e_2\}$ are
$$
A_H=\left(\begin{array}{cc}a+|H|^2&0\\ \\0&-a+|H|^2\end{array}\right)\quad\textnormal{and}\quad
A_U=\left(\begin{array}{cc}b&0\\ \\0&-b\end{array}\right),
$$
and then, from the last biharmonic condition \eqref{eq:bih_pmc}, we get
$0=\trace(A_HA_U)=2ab$. Since $a\neq 0$, we get $b=0$, i.e. $A_U=0$.

Finally, we extend the result by continuity throughout $\Sigma^2$, and we conclude.
\end{proof}

\begin{corollary}\label{lemma:T=ct} If $\Sigma^2$ is a proper-biharmonic pmc surface in $\mathbb{S}^n(c)\times\mathbb{R}$ then the tangent part $T$ of $\xi$ has constant length.
\end{corollary}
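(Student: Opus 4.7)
The plan is to invoke Lemma \ref{lemma:5.1}, which splits the analysis into two clean cases. In case (1), $\Sigma^2$ is a minimal surface of a small hypersphere $\mathbb{S}^{n-1}(2c)\subset\mathbb{S}^n(c)$, so $\Sigma^2$ lies in a slice $\mathbb{S}^n(c)\times\{t_0\}$. Then $\xi$ is everywhere normal to $\Sigma^2$, which forces $T=0$ identically, and therefore $|T|$ is (trivially) constant.

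The substantive case is (2). Here I would exploit the basic identity that comes from $\xi$ being parallel in $\bar M$: decomposing $\xi=T+N$ into tangent and normal parts and applying $\bar\nabla_X\xi=0$, the tangential component yields
$$
\nabla_X T = A_N X
$$
for every $X$ tangent to $\Sigma^2$. Consequently, to conclude that $T$ is parallel (and in particular that $|T|^2$ is constant), it suffices to show $A_N=0$.

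Now comes the key observation: in case (2), the condition $H\perp\xi$ gives $\langle H,N\rangle = \langle H,\xi\rangle - \langle H,T\rangle = 0$, since $H$ is normal and $T$ is tangent. Thus $N$ is a normal vector field orthogonal to $H$ at every point. The last condition in Lemma \ref{lemma:5.1}(2) then applies with $U=N$, giving $A_N=0$ on $\Sigma^2$. Combining with the identity above, $\nabla T = 0$, so $|T|$ is constant.

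I do not expect any serious obstacle; the argument is essentially a two-line consequence of Lemma \ref{lemma:5.1} together with the formula $\nabla_X T=A_N X$. The only thing to be careful about is to note that the same conclusion holds uniformly on $\Sigma^2$ (and not merely on the open set $W$ where $H$ is a non-umbilical direction), which is already built into the statement of Lemma \ref{lemma:5.1} via the continuity extension used there.
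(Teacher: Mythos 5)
Your argument is correct and is essentially the paper's own proof: the pseudo-umbilical case gives $T=0$ outright, and in the non-pseudo-umbilical case the paper likewise combines $\nabla_XT=A_NX$ (from $\bar\nabla\xi=0$) with $A_N=0$ (since $N\perp H$ and $A_U=0$ for normal $U\perp H$), extending by continuity from $W$ to all of $\Sigma^2$. No issues.
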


\begin{proof} If the surface is pseudo-umbilical, then $T=0$.

Now, assume that $\Sigma^2$ is non-pseudo-umbilical and we shall work on $W$. Let $p$ be an arbitrary point in $W$ and $X\in T_p\Sigma^2$. Since $\bar\nabla\xi=0$ and $H\perp N$, we get that
$\nabla_X T=A_NX=0$. Then, we have
$$
X(|T|^2)=2\langle\nabla_XT, T\rangle=0.
$$
By continuity, it follows that $X(|T|^2)=0$ for any tangent vector field $X$ defined on $\Sigma^2$, and we come to the conclusion.
\end{proof}

\begin{remark} We note that, if $\Sigma^2$ is a proper-biharmonic pmc surface in $\mathbb{S}^n(c)\times\mathbb{R}$ with $T=0$, then it lies in $\mathbb{S}^n(c)$ and is pseudo-umbilical (see \cite{BMO1}).
\end{remark}

We recall now the following two results.

\begin{lemma}[\cite{AdCT}]\label{lemma:non-umb} Let $\Sigma^2$ be a non-pseudo-umbilical pmc surface
in $M^n(c)\times\mathbb{R}$, with second fundamental form $\sigma$,
and define on $W$ the subbundle $L=\Span\{\im\sigma\cup N\}$ of the
normal bundle. Then $L$ is parallel, i.e. if $U$ is a smooth section on $L$, then
$\nabla^{\perp}U\in L$.
\end{lemma}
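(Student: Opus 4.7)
The plan is to prove instead that the orthogonal complement $L^{\perp}$ in the normal bundle is parallel, since for a subbundle of a bundle with a metric connection this is equivalent to the parallelism of $L$. The statement is local, so fix a point $p\in W$ and choose a local orthonormal tangent frame $\{e_1,e_2\}$ on a neighbourhood of $p$ which simultaneously diagonalises $A_V$ for every normal $V$; the existence of such a frame is exactly \cite[Lemma~1]{AdCT}. In this frame $\sigma(e_1,e_2)=0$, so setting $B:=\tfrac12(\sigma(e_1,e_1)-\sigma(e_2,e_2))$ we have $\sigma(e_i,e_i)=H\pm B$ and $L=\Span\{H,B,N\}$. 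A local normal section $U$ belongs to $L^{\perp}$ if and only if $U\perp\im\sigma$ and $U\perp N$, i.e.\ $A_U\equiv 0$ and $\langle U,N\rangle\equiv 0$, so the task is: for such $U$ and every tangent $X$, verify $\nabla^{\perp}_X U\perp H$, $\nabla^{\perp}_X U\perp N$ and $\nabla^{\perp}_X U\perp\im\sigma$.

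The first is immediate from $\nabla^{\perp}H=0$. The second uses the identity $\nabla^{\perp}_X N=-\sigma(X,T)$, already noted in Section~\ref{s1} as a consequence of $\bar\nabla\xi=0$: then
$$\langle\nabla^{\perp}_X U,N\rangle=-\langle U,\nabla^{\perp}_X N\rangle=\langle U,\sigma(X,T)\rangle=\langle A_U X,T\rangle=0.$$

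The third (and only) nontrivial step uses Codazzi. Re-run the derivation of \eqref{eq:Codazzi} without assuming $\nabla^{\perp}V=0$; the only change is that the left-hand side acquires the correction $A_{\nabla^{\perp}_Y V}X-A_{\nabla^{\perp}_X V}Y$, giving the general identity
$$(\nabla_X A_V)Y-(\nabla_Y A_V)X-A_{\nabla^{\perp}_X V}Y+A_{\nabla^{\perp}_Y V}X=c\langle V,N\rangle(\langle Y,T\rangle X-\langle X,T\rangle Y).$$
Substituting $V=U$ and using $A_U\equiv 0$ (hence $(\nabla_X A_U)Y=0$) together with $\langle U,N\rangle=0$ collapses this to the symmetry
$$A_{\nabla^{\perp}_X U}Y=A_{\nabla^{\perp}_Y U}X\qquad\text{for all tangent }X,Y.$$
Taking $(X,Y)=(e_1,e_2)$ and expanding both sides in the diagonalising frame $\{e_1,e_2\}$, the off-diagonal vanishing $\sigma(e_1,e_2)=0$ forces $\langle\sigma(e_j,e_j),\nabla^{\perp}_{e_i}U\rangle=0$ for $i\ne j$. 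Combined with $\langle\nabla^{\perp}_{e_i}U,H\rangle=0$ this yields $\langle\nabla^{\perp}_{e_i}U,B\rangle=0$, so $\nabla^{\perp}_{e_i}U\perp\im\sigma$, and by linearity $\nabla^{\perp}_X U\perp\im\sigma$ for every tangent $X$.

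The main obstacle is the Codazzi step: one has to re-derive \eqref{eq:Codazzi} for a non-parallel normal field and then perform the algebraic reduction in the diagonalising frame. Everything else is bookkeeping, and the conclusion of the lemma on $W$ follows at once.
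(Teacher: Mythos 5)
Your argument is correct. Note that the paper gives no proof of this lemma at all---it is recalled verbatim from \cite{AdCT}---so there is no in-text argument to compare against; what you have written is a valid self-contained reconstruction using only ingredients already present in the paper. Dualizing to $L^{\perp}$ is legitimate (for a metric connection a subbundle is parallel if and only if its orthogonal complement is), and each of your three orthogonality checks is sound: $\nabla^{\perp}H=0$ handles $H$; the identity $\nabla^{\perp}_XN=-\sigma(X,T)$, which the paper derives from $\bar\nabla\xi=0$ in Section~\ref{s1}, together with $A_U\equiv 0$ handles $N$; and the Codazzi identity for a non-parallel normal field, correctly carrying the extra terms $-A_{\nabla^{\perp}_XV}Y+A_{\nabla^{\perp}_YV}X$, collapses to the symmetry $A_{\nabla^{\perp}_XU}Y=A_{\nabla^{\perp}_YU}X$ once $A_U\equiv0$ and $\langle U,N\rangle=0$ are imposed. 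Evaluated on the simultaneously diagonalizing frame of \cite[Lemma~1]{AdCT}, the two sides are multiples of the distinct vectors $e_2$ and $e_1$, so both coefficients vanish, which together with $\nabla^{\perp}H=0$ forces $\nabla^{\perp}_{e_i}U\perp\Span\{\im\sigma\}$. The one point worth flagging is that passing to $L^{\perp}$ presupposes that $L$ has locally constant rank, so that every vector of $L^{\perp}_p$ extends to a local smooth section; this is implicit in the lemma's phrasing of $L$ as a subbundle, but the equivalent direct computation (showing $\nabla^{\perp}_XH$, $\nabla^{\perp}_X\sigma(e_i,e_i)$ and $\nabla^{\perp}_XN$ lie in $L$, using exactly the same two identities) sidesteps the issue and is the form in which the result is usually established.
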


\begin{proposition}[\cite{FR}]\label{prop:deltaT} If $\Sigma^2$ is a pmc
surface in $M^n(c)\times\mathbb{R}$, then
$$
\frac{1}{2}\Delta|T|^2=|A_N|^2+K|T|^2+2T(\langle H,N\rangle),
$$
where $K$ is the Gaussian curvature of the surface.
\end{proposition}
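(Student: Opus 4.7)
The plan is to apply the Weitzenb\"ock-type identity
$$\frac{1}{2}\Delta|T|^2 = |\nabla T|^2 + \langle\Delta T,T\rangle$$
to the tangent vector field $T$, where $\Delta T=\trace\nabla^2 T$ denotes the rough Laplacian. The essential structural input, already used in the derivation preceding Proposition \ref{p:delta}, is that $\bar\nabla\xi=0$; writing $\xi=T+N$ and applying the Gauss and Weingarten formulas yields
$$\nabla_X T = A_N X, \qquad \nabla^\perp_X N = -\sigma(X,T)$$
for every tangent $X$. The first identity reduces the gradient term to $|\nabla T|^2 = \sum_i|A_N E_i|^2 = |A_N|^2$.

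The bulk of the work is computing $\langle\Delta T,T\rangle$. Fix $p\in\Sigma^2$ and a geodesic tangent frame $\{E_1,E_2\}$ at $p$, so that $\Delta T|_p = \sum_i\nabla_{E_i}(A_N E_i)$. Because $N$ is not parallel in the normal bundle, each of these covariant derivatives splits into a ``Codazzi'' piece and a correction coming from $\nabla^\perp_{E_i} N$. Applying the Codazzi equation to the trace $\sum_i(\nabla^\perp_{E_i}\sigma)(E_i,E_j)$, together with the pmc hypothesis $\nabla^\perp H=0$ and the explicit form of $\bar R$ in \eqref{eq:barR}, gives
$$\sum_i(\nabla^\perp_{E_i}\sigma)(E_i,E_j) = c\langle E_j,T\rangle\, N,$$
which repackages the Codazzi piece as $c|N|^2 T$. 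The correction is handled by replacing $\nabla^\perp_{E_i}N$ by $-\sigma(E_i,T)$ and expanding in a local normal frame $\{E_\alpha\}$ to get $\sum_i A_{\sigma(E_i,T)}E_i = \sum_\alpha A_\alpha^2 T$. Combined, this yields
$$\langle\Delta T,T\rangle = c|N|^2|T|^2 - \sum_\alpha|A_\alpha T|^2.$$

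It remains to rewrite the right-hand side in terms of $K$. The Gauss equation \eqref{eq:R} on a $2$-plane gives $K = c(1-|T|^2) + \sum_\alpha\det A_\alpha$, and since $|N|^2=1-|T|^2$ the ambient-curvature terms match. What is left is the pointwise $2\times 2$ identity
$$\det(A_\alpha)\,|T|^2 = (\trace A_\alpha)\langle A_\alpha T,T\rangle - |A_\alpha T|^2,$$
which, summed over $\alpha$ using $\sum_\alpha(\trace A_\alpha)\langle A_\alpha T,T\rangle = 2\langle H,\sigma(T,T)\rangle$, rewrites $-\sum_\alpha|A_\alpha T|^2$ as $\bigl(\sum_\alpha\det A_\alpha\bigr)|T|^2 - 2\langle H,\sigma(T,T)\rangle$. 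Finally, the pmc condition together with $\nabla^\perp_T N=-\sigma(T,T)$ gives $T(\langle H,N\rangle) = \langle H,\nabla^\perp_T N\rangle = -\langle H,\sigma(T,T)\rangle$, and the stated formula drops out. The main obstacle is the bookkeeping in the Codazzi step: the non-parallelism of $N$ injects an $A_{\nabla^\perp N}$ correction into the covariant derivative of $A_N$, and one has to arrange the sums so that the pmc hypothesis cleanly kills the $\nabla^\perp_{E_j}H$ term while the curvature sum $\sum_i(\bar R(E_i,\cdot)E_i)^\perp$ extracts exactly the $c\langle\cdot,T\rangle N$ contribution; once this is in hand, the remaining manipulations are pointwise $2\times 2$ linear algebra.
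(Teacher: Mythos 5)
Your proof is correct. Note that the paper itself gives no proof of this proposition --- it is quoted from the reference [FR] --- so there is nothing in-paper to compare against; your argument is the natural one and is fully consistent with the identities the paper does use (namely $\nabla_XT=A_NX$ and $\nabla^{\perp}_XN=-\sigma(X,T)$ from $\bar\nabla\xi=0$, the Codazzi equation with \eqref{eq:barR}, and the pmc hypothesis). All the individual steps check out: the trace of the Codazzi equation does give $\sum_i(\nabla^{\perp}_{E_i}\sigma)(E_i,E_j)=c\langle E_j,T\rangle N$ once $\nabla^{\perp}H=0$ kills the $m\nabla^{\perp}_{E_j}H$ term, the $2\times 2$ identity is Cayley--Hamilton applied to $A_\alpha$ and paired with $T$, and $T(\langle H,N\rangle)=-\langle H,\sigma(T,T)\rangle$ follows from pmc.
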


\begin{corollary} If $\Sigma^2$ is a non-pseudo-umbilical proper-biharmonic pmc surface in $\mathbb{S}^n(c)\times\mathbb{R}$, then it is flat.
\end{corollary}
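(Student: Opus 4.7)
The plan is to combine the three results immediately preceding the statement: Lemma \ref{lemma:5.1}, Corollary \ref{lemma:T=ct}, and Proposition \ref{prop:deltaT}. The key observation is that in the non-pseudo-umbilical case almost every term in the $\Delta|T|^2$ formula vanishes, forcing $K|T|^2=0$.

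First I would unpack what Lemma \ref{lemma:5.1} gives in the non-pseudo-umbilical setting: $H\perp\xi$, and $A_U=0$ for every normal vector field $U$ orthogonal to $H$. Decomposing $\xi=T+N$ with $T$ tangent and $N$ normal, and using that $H$ is normal, we obtain $\langle H,N\rangle=\langle H,\xi\rangle=0$. Hence $N\perp H$, which by the second part of Lemma \ref{lemma:5.1} implies $A_N=0$. In particular $|A_N|^2=0$, and moreover the term $T(\langle H,N\rangle)$ in Proposition \ref{prop:deltaT} vanishes identically.

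Next, Corollary \ref{lemma:T=ct} says that $|T|$ is constant on $\Sigma^2$, so $\Delta|T|^2=0$. Substituting into the Weitzenb\"ock-type identity of Proposition \ref{prop:deltaT},
\begin{equation*}
0=\tfrac{1}{2}\Delta|T|^2=|A_N|^2+K|T|^2+2T(\langle H,N\rangle)=K|T|^2.
\end{equation*}
It remains to rule out the possibility $|T|\equiv 0$. If it held, the remark following Corollary \ref{lemma:T=ct} would force $\Sigma^2$ to lie in $\mathbb{S}^n(c)$ and be pseudo-umbilical, contradicting our hypothesis. Since $|T|$ is a nonzero constant, we conclude $K\equiv 0$, i.e., $\Sigma^2$ is flat.

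I do not anticipate any real obstacle here: the proof is essentially a bookkeeping argument that chains together the three preceding results, the only nontrivial step being the identification $A_N=0$, which follows immediately once one notices that $H\perp\xi$ together with the normality of $H$ forces $N$ to be a normal direction orthogonal to $H$.
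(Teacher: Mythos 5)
Your proposal is correct and follows exactly the chain of reasoning the paper intends (the paper leaves this corollary's proof implicit, placing it right after Lemma \ref{lemma:5.1}, Corollary \ref{lemma:T=ct} and Proposition \ref{prop:deltaT}): $H\perp\xi$ gives $\langle H,N\rangle=0$, hence $A_N=0$ by Lemma \ref{lemma:5.1}, constancy of $|T|$ kills the Laplacian, and non-pseudo-umbilicity forces $|T|\neq 0$, so $K=0$. No gaps.
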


In the following, let $\Sigma^2$ be a non-pseudo-umbilical proper-bi\-har\-mo\-nic pmc surface in $\mathbb{S}^n(c)\times\mathbb{R}$. It follows that $|T|=\cst\neq 0$, i.e. $|N|=\cst\in[0,1)$. Working on the set $W$, since
$A_U=0$ for any normal vector
field $U$ orthogonal to $H$, we obtain
$\dim\Span\{\im\sigma\}=1$ and then, $\dim L=2$.
Now, we apply \cite[Theorem~2]{ET} and obtain that $W$, and therefore $\Sigma^2$, lies
in $\mathbb{S}^3(c)\times\mathbb{R}$.

Further, we shall prove that $|T|=1$ on $\Sigma^2$, i.e. $|N|=0$.

Assume that $|N|>0$. Then there is a global orthonormal frame field $\big\{E_3=\frac{H}{|H|},E_4=\frac{N}{|N|}\big\}$, and we have $A_4=0$ and $|\sigma|^2=|A_3|^2=c(2-|T|^2)$.

On the
other hand, since the surface is flat, from \eqref{eq:R}, it follows
that
$$
0=2K=2c(1-|T|^2)+4|H|^2-|\sigma|^2=-c|T|^2+4|H|^2
$$
and then, that
\begin{equation}\label{eq:equality}
4|H|^2=c|T|^2.
\end{equation}

From Proposition \ref{p:delta} and Corollary \ref{coro:bih_pmc}, in
the same way as in the proof of Theorem~\ref{thm:gap}, we have
\begin{equation}\label{eq:eq}
\frac{1}{2}\Delta|A_H|^2=|\nabla A_H|^2+2(\trace A_H^3)-4c|H|^4.
\end{equation}
Next, since Corollary \ref{lemma:T=ct} implies that
$|A_H|^2=c(2-|T|^2)|H|^2$ is constant, and, from relation
\eqref{eq:equality}, it follows
$$
\begin{array}{ll}
\trace A_H^3&=\trace\phi_H^3+3|H|^2|\phi_H|^2+2|H|^6=3c(2-|T|^2)|H|^4-4|H|^6\\ \\&=6c|H|^4-16|H|^6,
\end{array}
$$
equation \eqref{eq:eq} leads to
$$
\begin{array}{ll}
0&=|\nabla A_H|^2+8c|H|^4-32|H|^6=|\nabla A_H|^2+8|H|^4(c-4|H|^2)\\ \\&=|\nabla A_H|^2+8c|H|^4|N|^2,
\end{array}
$$
and we get that $|\nabla A_H|^2=0$ and $N=0$. Therefore, the surface is a vertical cylinder. Now, since $\Sigma^2$ is flat,
we have $|H|=\frac{1}{2}\sqrt{c}$, i.e. $\Sigma^2=\pi^{-1}(\gamma)$, where $\gamma$ is a proper-biharmonic pmc curve in $\mathbb{S}^3(c)$
with curvature $\kappa=2|H|=\sqrt{c}$. It follows that $\gamma$ actually is a proper-biharmonic circle
in $\mathbb{S}^2(c)$.

Summarizing, we have the following rigidity result.

\begin{theorem}\label{thm:surfaces} Let $\Sigma^2$ be a proper-biharmonic pmc surface in $\mathbb{S}^n(c)\times\mathbb{R}$. Then either
\begin{enumerate}
\item $\Sigma^2$ is a minimal surface of a small hypersphere $\mathbb{S}^{n-1}(2c)\subset\mathbb{S}^n(c)$$;$ or

\item $\Sigma^2$ is $($an open part of$)$ a vertical cylinder $\pi^{-1}(\gamma)$, where $\gamma$ is
a circle in $\mathbb{S}^2(c)$ with curvature equal to $\sqrt{c}$, i.e. $\gamma$ is a biharmonic circle in $\mathbb{S}^2(c)$.
\end{enumerate}
\end{theorem}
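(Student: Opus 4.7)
The plan is to split immediately into the two cases that the preceding lemmas make available: the pseudo-umbilical case and the non-pseudo-umbilical one. In the pseudo-umbilical case, Lemma~\ref{lemma:5.1}(1) (which rests on \cite[Lemma~3]{AdCT} and \cite[Theorem~3.4]{CMO}) already identifies $\Sigma^2$ as a minimal surface of a small hypersphere $\mathbb{S}^{n-1}(2c)\subset\mathbb{S}^n(c)$, so case (1) of the theorem is settled without further work.

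For the non-pseudo-umbilical case, I would first invoke Lemma~\ref{lemma:5.1}(2) to obtain $H\perp\xi$, the trace identity $|A_H|^2=c(2-|T|^2)|H|^2$, and $A_U=0$ for every normal $U\perp H$. Combining this with Corollary~\ref{lemma:T=ct} gives $|T|=\cst$, and the corollary preceding the theorem gives that $\Sigma^2$ is flat. Next, I would use that $\dim\Span\{\im\sigma\}=1$ (since $A_U=0$ for $U\perp H$) so that the parallel subbundle $L=\Span\{\im\sigma\cup N\}$ of Lemma~\ref{lemma:non-umb} has $\dim L\le 2$; Eschenburg--Tribuzy's reduction of codimension \cite[Theorem~2]{ET} then places $\Sigma^2$ inside $\mathbb{S}^3(c)\times\mathbb{R}$.

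The main task is then to show $|N|=0$, equivalently $|T|=1$. Assuming by contradiction that $|N|>0$, I can choose a global orthonormal normal frame $\{E_3=H/|H|,E_4=N/|N|\}$, with $A_4=0$ and $|\sigma|^2=|A_3|^2=c(2-|T|^2)$. Combining flatness $K=0$ with the Gauss equation \eqref{eq:R} yields the algebraic identity $4|H|^2=c|T|^2$. Now I would apply the Simons-type formula of Proposition~\ref{p:delta} to $V=H$, using Corollary~\ref{coro:bih_pmc} and $A_HT=0$ as in the proof of Theorem~\ref{thm:gap2}, to produce
\begin{equation*}
\tfrac{1}{2}\Delta|A_H|^2=|\nabla A_H|^2+2\trace A_H^3-4c|H|^4.
\end{equation*}
Since $|A_H|^2$ is constant and the traceless part $\phi_H$ has $|\phi_H|^2$ expressible via $|T|^2$, the identity $4|H|^2=c|T|^2$ collapses $\trace A_H^3$ to $6c|H|^4-16|H|^6$, and the formula reduces to $0=|\nabla A_H|^2+8c|H|^4|N|^2$. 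This forces $N=0$ (contradicting $|N|>0$) and simultaneously $\nabla A_H=0$. This is the main obstacle in the argument; everything hinges on extracting the $|N|^2$ term from the Simons identity after flatness and the biharmonic trace condition have been plugged in.

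Once $|T|=1$ is established, $\Sigma^2$ is a vertical cylinder $\pi^{-1}(\gamma)$ over a curve in $\mathbb{S}^3(c)$. By Remark~\ref{rem:cyl}, $\Sigma^2$ is proper-biharmonic iff $\gamma$ is proper-biharmonic in $\mathbb{S}^3(c)$, and flatness together with $4|H|^2=c|T|^2=c$ gives geodesic curvature $\kappa=2|H|=\sqrt{c}$. The further reduction of codimension from Lemma~\ref{lemma:non-umb} (applied now to $\gamma$ in $\mathbb{S}^3(c)$, or equivalently by observing that a biharmonic circle of curvature $\sqrt{c}$ must lie in a totally geodesic $\mathbb{S}^2(c)$) identifies $\gamma$ as a biharmonic circle in $\mathbb{S}^2(c)$, producing case (2).
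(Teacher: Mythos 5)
Your proposal is correct and follows essentially the same route as the paper: the pseudo-umbilical case via Lemma~\ref{lemma:5.1}, then for the non-pseudo-umbilical case the reduction to $\mathbb{S}^3(c)\times\mathbb{R}$ via \cite[Theorem~2]{ET}, the identity $4|H|^2=c|T|^2$ from flatness, and the Simons-type formula collapsing to $0=|\nabla A_H|^2+8c|H|^4|N|^2$ to force $N=0$. The concluding identification of $\gamma$ as a biharmonic circle of curvature $\sqrt{c}$ in $\mathbb{S}^2(c)$ also matches the paper's argument.
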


From Theorem \ref{thm:surfaces} we can see that equation $\nabla
A_H=0$ holds for all proper-biharmonic surfaces. From this point of
view, the following result can be seen as a generalization of that
theorem for higher dimensional submanifolds. Before stating the
theorem, we have to mention that proper-biharmonic pmc submanifolds
in $\mathbb{S}^n(c)$, with $\nabla A_H=0$, were classified in
\cite{BO}.

\begin{theorem}\label{thm:general} If $\Sigma^m$, with $m\geq 3$, is a proper-biharmonic pmc submanifold in $\mathbb{S}^n(c)\times\mathbb{R}$ such that $\nabla
A_H=0$, then either
\begin{enumerate}
\item $\Sigma^m$ is a proper-biharmonic pmc submanifold in
$\mathbb{S}^n(c)$, with $\nabla A_H=0$$;$ or

\item $\Sigma^m$ is $($an open part of$)$ a vertical cylinder $\pi^{-1}(\Sigma^{m-1})$, where $\Sigma^{m-1}$ is a proper-biharmonic pmc submanifold in
$\mathbb{S}^n(c)$ such that the shape operator corresponding to
its mean curvature vector field in $\mathbb{S}^n(c)$ is
parallel.
\end{enumerate}
\end{theorem}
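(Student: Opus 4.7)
The plan is to mirror the strategy of Theorem~\ref{thm:surfaces}, reducing the question to a dichotomy $|T|\in\{0,1\}$ whose two branches correspond to the two alternatives of the statement. First, the hypotheses propagate to strong constancy results: since $\nabla^{\perp}H=0$ we have $|H|$ constant, and the assumption $\nabla A_H=0$ yields $|A_H|^2$ constant. Combining these with the biharmonic identity $|A_H|^2=c(m-|T|^2)|H|^2$ from Corollary~\ref{coro:bih_pmc}, and using that $|H|\neq 0$ because $\Sigma^m$ is proper-biharmonic, one concludes that $|T|^2$ is constant on $\Sigma^m$. Corollary~\ref{coro:bih_pmc} also gives $H\perp\xi$, hence $H\perp N$ and $A_H T=0$ (as already observed in the proof of Theorem~\ref{thm:gap2}).

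The second step extracts the algebraic consequences of $\nabla A_H=0$. From $\bar\nabla\xi=0$ one has $\nabla_X T=A_N X$ for every tangent $X$. Differentiating $A_H T=0$ and invoking $\nabla A_H=0$ gives $A_H(A_N X)=0$, i.e. $A_H A_N=0$; Lemma~\ref{lemma:com} applied to the parallel section $H$ then produces $A_N A_H=0$. Constancy of $|T|^2$ is equivalent to $A_N T=0$, since $\nabla|T|^2=2A_N T$. In particular, $A_H$ and $A_N$ are commuting self-adjoint endomorphisms of $T\Sigma^m$ with $A_H A_N=0$ and $A_H T=A_N T=0$, and the distribution $\ker A_H\supset\mathbb{R}T$ is parallel (differentiating $A_H Y=0$ for $Y\in\ker A_H$ and using $\nabla A_H=0$ shows $A_H(\nabla_X Y)=0$).

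The technical heart of the argument is to show $|T|^2\in\{0,1\}$. Once this is established the two possibilities match the conclusion cleanly: if $|T|=0$ then the height function is constant and $\Sigma^m\subset\mathbb{S}^n(c)\times\{t_0\}\cong\mathbb{S}^n(c)$, giving case~(1); if $|T|=1$ then $\xi=T$ is tangent, so $\Sigma^m=\pi^{-1}(\Sigma^{m-1})$ is a vertical cylinder, and Remark~\ref{rem:cyl} together with the identification $H=\frac{m-1}{m}H_0$ and the fact that $A^{\Sigma^m}_{H_0}$ is the extension by zero along $\xi$ of $A^{\Sigma^{m-1}}_{H_0}$ shows that $\nabla A_H=0$ lifts to $\nabla A_{H_0}=0$ on $\Sigma^{m-1}$, giving case~(2). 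To rule out the intermediate case $0<|T|^2<1$ I would combine two identities. On the one hand, the height function $h$ satisfies $\nabla h=T$, $\Delta h=\trace A_N=m\langle H,N\rangle=0$, and $|\nabla h|^2=|T|^2$ is constant; the Bochner formula then yields $|A_N|^2=-\ric(T,T)$, and inserting $\ric(T,T)$ computed from the Gauss equation~\eqref{eq:R} produces $\sum_\alpha|A_{E_\alpha}T|^2=|A_N|^2+c(m-1)|N|^2|T|^2$. On the other hand, Proposition~\ref{p:delta} applied to $A_H$ with $\Delta|A_H|^2=0$ and $\nabla A_H=0$ collapses to the Simons-type identity $\trace(A_H^3)=cm|H|^4$. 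Exploiting the simultaneous block-diagonal structure of $A_H$ and $A_N$ provided by the commutation and annihilation identities above, these two relations, combined with a reduction of codimension via \cite[Theorem~2]{ET} applied to a suitable parallel subbundle of the normal bundle (built from $H$, $N$, and $\im\sigma$, as in the proof of Theorem~\ref{thm:surfaces}), should force $T=0$ or $|T|=1$ and thus complete the proof; carrying out this last step cleanly for every $m\geq 3$ is where the genuine work lies.
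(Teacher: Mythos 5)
Your preliminary reductions are correct and match facts the paper also uses: $|A_H|^2$ is constant because $\nabla A_H=0$, hence $|T|^2$ is constant by the identity $|A_H|^2=c(m-|T|^2)|H|^2$ of Corollary \ref{coro:bih_pmc}; moreover $H\perp\xi$, $A_HT=0$, $A_NT=0$ and $A_HA_N=0$. But the proof stops exactly at the decisive point. You concede that excluding $0<|T|^2<1$ is ``where the genuine work lies,'' and the route you sketch is unlikely to close the gap: the Bochner identity for the height function yields $\sum_\alpha|A_\alpha T|^2=|A_N|^2+c(m-1)|T|^2|N|^2$, an equation involving the shape operators in \emph{all} normal directions, and for $m\geq 3$ the biharmonicity condition $\trace(A_HA_U)=0$ for $U\perp H$ does not force $A_U=0$ (that implication is specific to surfaces, via the simultaneous diagonalization of Lemma \ref{lemma:5.1}); consequently neither the codimension reduction of \cite[Theorem~2]{ET} nor any control of $\sum_\alpha|A_\alpha T|^2$ is available, and the Simons-type relation $\trace A_H^3=cm|H|^4$ by itself does not distinguish between values of $|T|^2$.

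The missing idea is the curvature consequence of $\nabla A_H=0$, which you never invoke. The Ricci commutation formula \eqref{eq:C} gives $[R(X,Y),A_H]=0$ for all tangent $X,Y$; combined with $A_HT=0$ this yields $A_HR(X,T)T=R(X,T)A_HT=0$. Tracing, $0=\sum_{i=1}^m\langle A_HR(E_i,T)T,E_i\rangle$, and expanding $R(E_i,T)T$ by the Gauss equation \eqref{eq:R} while using $[A_H,A_\alpha]=0$ (Lemma \ref{lemma:com}), $A_HT=0$, and $\trace(A_HA_\alpha)=0$ for $\alpha\geq m+2$ from \eqref{eq:bih_pmc}, one obtains
$$
0=c(\trace A_H)|T|^2(1-|T|^2)=cm|H|^2|T|^2(1-|T|^2),
$$
hence $|T|\in\{0,1\}$ since $|H|\neq 0$. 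This single computation replaces the entire unfinished portion of your argument; the identification of the two alternatives from $|T|=0$ and $|T|=1$ then proceeds as you describe.
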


\begin{proof} On the one hand, since $\nabla
A_H=0$, we have that $[R(X,Y),A_H]=0$ for any vector fields $X$ and $Y$ tangent to $\Sigma^m$. On the other hand, since $\Sigma^m$ is a proper-biharmonic pmc submanifold, as we have seen in the proof of Theorem \ref{thm:gap2}, we also have $A_HT=0$, and then it follows that
$A_HR(X,T)T=0$.

Now, let us consider a local orthonormal frame field $\{E_i\}_{i=1}^{m}$ on $\Sigma^m$, and $\big\{E_{m+1}=\frac{H}{|H|},\ldots,E_{n+1}\big\}$ an orthonormal frame field in the normal bundle. We obtain, using \eqref{eq:R}, Lemma \ref{lemma:com} and again $A_HT=0$,
$$
\begin{array}{ll}
0&=\sum_{i=1}^{m}\langle A_HR(E_i,T)T,E_i\rangle\\ \\ &=c(\trace A_H)|T|^2(1-|T|^2)+\sum_{\alpha=m+2}^{n+1}(\trace(A_HA_{\alpha}))\langle A_{\alpha}T,T\rangle.
\end{array}
$$
From the last equation of \eqref{eq:bih_pmc}, we know that $\trace(A_HA_{\alpha})=0$ for $\alpha\in\{m+2,\ldots,n+1\}$, and therefore we have
$$
0=cm|H|^2|T|^2(1-|T|^2),
$$
i.e. either $|T|=0$ or $|T|=1$, which completes the proof.
\end{proof}

\end{document}